\newtheorem{thrm}{Theorem}[section]
\newtheorem{col}[thrm]{Corollary}
\newtheorem{fact}[thrm]{Proposition}
\newtheorem{prob}[thrm]{Problem}
\newtheorem{question}[thrm]{Question}
\theoremstyle{definition}
\newtheorem{defin}[thrm]{Definition}
\newtheorem{remark}[thrm]{Remark}
\newtheorem{exa}[thrm]{Example}
\DeclareMathOperator{\vspan}{span}
\DeclareMathOperator{\Ker}{Ker}
\newcommand{\NN}{\mathbb{N}}
\newcommand{\RR}{\mathbb{R}}
\title[On weak and pointwise topologies]{On the weak and pointwise topologies in function spaces II}
\author{Miko\l aj Krupski and Witold Marciszewski}
\address{Institute of Mathematics\\ University of Warsaw\\ \newline Ul. Banacha 2\\02--097 Warszawa\\ Poland }
\email{mkrupski@mimuw.edu.pl}
\email{wmarcisz@mimuw.edu.pl}
\subjclass[2010]{46E10, 54C35}
\keywords{Function space; pointwise convergence topology; weak topology; $C_p(X)$ space;
Fr\'echet-Urysohn space; sequential space; k-space}
\date{\today}
\thanks{The first author was partially supported by the Polish National Science Center research grant UMO-2012/07/N/ST1/03525. The second author was partially supported by the Polish National Science Center research grant DEC-2012/07/B/ST1/03363}
\begin{document}

\begin{abstract}
For a compact space $K$ we denote by $C_w(K)$ ($C_p(K)$) the space of continuous real-valued functions on $K$ endowed with the weak (pointwise)
topology. In this paper we discuss the following basic question which seems to be open:
\textit{Let $K$ and $L$ be infinite compact spaces. Can it happen that $C_w(K)$ and $C_p(L)$ are homeomorphic?}

M.\ Krupski proved that the above problem has a negative answer when $K=L$ and $K$ is finite-dimensional and metrizable. We extend this result to the class of finite-dimensional Valdivia compact spaces $K$.
\end{abstract}

\maketitle

\section{Introduction}
In this paper we continue the research initiated in \cite{Kr}.%popr. 1
For a compact space $K$, the set of all real-valued continuous functions on $K$ equipped with 
the supremum norm is a Banach space which we denote by $C(K)$. One can consider two, other than norm, topologies on the set of continuous functions on $K$: the weak topology
i.e. the weakest topology making all linear functionals continuous and the pointwise topology, i.e. the topology inherited from the product space
$\RR^K$.
Let us denote the latter two topological spaces by $C_w(K)$ and $C_p(K)$ respectively.
Unlike the norm topology, both weak and pointwise topologies are often non-metrizable. Namely, $C_w(K)$ is non-metrizable for any infinite compact $K$ and
$C_p(K)$ is non-metrizable provided $K$ is uncountable. It is easy to see that if $K$ is infinite, then the pointwise topology is strictly weaker than the weak
topology. It is not clear however whether these two topologies are always non-homeomorphic. The following problem was addressed in \cite{Kr}.
\begin{prob}\label{problem1}
Can $C_p(K)$ and $C_w(K)$ be homeomorphic for an infinite compact space $K$?
\end{prob}
One can ask even a more general question.
\begin{prob}\label{problem2}
Can $C_p(K)$ and $C_w(L)$ be homeomorphic for infinite compact spaces $K$ and $L$?
\end{prob}

Let us point out here that 
it may happen that a vector space is equipped with two different but homeomorphic topologies.
The following example illustrates this phenomenon.
\begin{exa}
Consider the set $\sigma=\{x\in\ell_2:x_n=0\text{ for all but finitely many }n\}$. From classical results in Infinite-Dimensional Topology it follows that
the space $(\sigma,\lVert\cdot \rVert)$ (i.e. $\sigma$ equipped with the norm topology inherited from the Hilbert space $\ell_2$) is homeomorphic to $(\sigma,\tau_p)$
(i.e. $\sigma$ equipped with the pointwise (product) topology inherited from $\RR^\omega$).
\end{exa}

It was proved in \cite{Kr} that the answer to Problem \ref{problem1} is in the negative provided $K$ is a metrizable $C$-space. In particular,
this covers the important case of all finite-dimensional metrizable compacta.

In this paper we will give a partial generalizations of this result (see Theorem \ref{faktor} and Remark \ref{rem-gen} below).
We will also show that the answer to both Problem
\ref{problem1} and Problem \ref{problem2} is in the negative provided $K$ or $L$ is scattered. It turns out that if $K$ is scattered, then the spaces $C_p(K)$%popr.2
and $C_w(K)$ can be topologically distinguished by the Fr\'echet-Urysohn property or the property (B) which we introduce in Section \ref{property_B}.
It is curious that in the case when $K$ is scattered we can distinguish $C_p(K)$ and $C_w(K)$ by a specific (and quite simple)
topological property and we could not do it,
for instance, for the unit interval (see \cite[Problem 1]{Kr}),
though for scattered $K$ the weak and the pointwise topologies seem to be closest to each other; they coincide on bounded subsets.

The paper is organized as follows. In Section 2 we introduce the basic notation and definitions used throughout the paper. In Section 3 we collect some simple observations
concerning homeomorphisms between function spaces equipped with the weak and pointwise topologies. In particular, we show that $C_p(K)$ and $C_w(L)$
are never uniformly homeomorphic. In Section 4 we prove our main result (Theorem \ref{faktor}). The technique which we use in the proof differs form
the technique used in \cite{Kr} and is inspired by some ideas from \cite{Ok} and \cite{Kr1}. Next we derive some corollaries to Theorem \ref{faktor}.
In particular,
we show that if $K$ is finite-dimensional Valdivia compactum then $C_p(K)$ and $C_w(K)$ are not homeomorphic. This result is a consequence of Theorem \ref{faktor}
and the following dichotomy: \textit{Every Valdivia compact space is either scattered or contains a closed uncountable metrizable subspace}
(see Proposition \ref{Valdivia_dychotomia}). In Section 5 we introduce and investigate a certain topological property (we call it the property (B)).
This property was suggested to us
by Taras Banakh as a property which possibly could distinguish topologically spaces $C_p(K)$ and $C_w(K)$.
We will show that it helps to solve Problem \ref{problem1}
if (and only if) $K$ is scattered. However, as we note, there is no need to introduce the property (B) in that case. It is well-known that
if $K$ is scattered then the space $C_p(K)$ is Fr\'echet-Urysohn. On the other hand $C_w(K)$ is never Fr\'echet-Urysohn for infinite $K$.
In Section 5 we shall look at this situation more closely. In Section 6 we collect some further, more general remarks concerning the property (B).

\section{Preliminaries}
All spaces under consideration are assumed to be Tychonoff.
We denote by $\omega$ the set of all non-negative integers, and $\NN=\omega\setminus\{0\}$.
Recall that a topological space $X$ is \textit{Fr\'echet-Urysohn} if for any $A\subseteq X$ and
$x\in \overline{A}$, there is a sequence $(x_n)_{n\in\omega}$ of points from $A$ which converges to $x$.
A space $X$ is \textit{scattered} if no nonempty subset $A\subseteq X$ is dense-in-itself. It is well-known that a compact space $K$ is
not scattered if and only if $K$ can be continuously mapped onto the unit interval $[0,1]$. 

Let $K$ be a compact space.
As usual, we identify the set $C(K)^*$, of all continuous linear functionals on $C(K)$, with
$M(K)$ -- the set of all signed Radon measures on $K$ of finite variation. Using this identification we can equip $M(K)$ with the weak* topology.
For $y\in K$ we denote by $\delta_y\in M(K)$ the corresponding Dirac measure. If $A\subseteq M(K)$ then $\vspan(A)$ is the linear space spanned by $A$, i.e. the minimal linear subspace of $M(K)$ containing
$A$.

The constant function equal to zero (on a given space) is denoted by $\underline{0}$. It will be clear from the context what is the domain of $\underline{0}$.

Recall that sets of the form
$$O(F,\tfrac{1}{m})=\{f\in C_p(K):(\forall x\in F)\;\;\lvert f(x) \rvert<\tfrac{1}{m}\},$$
where $F\subseteq K$ is finite and $m\in \NN$, are basic open neighborhoods of the function equal to zero on $K$ in $C_p(K)$.

Similarly, if $F$ is a finite subset of $M(L)$ and $n\in \NN$, then 
$$W(F,\tfrac{1}{n})=\{f\in C_w(L):(\forall \mu\in F)\;\;\lvert \mu(f) \rvert<\tfrac{1}{n}\}$$
is a basic open neighborhood of the function equal to zero on $L$ in $C_w(L)$.
If $F=\{x\}$ or $F=\{\mu\}$ we will write $O(x,\tfrac{1}{m})$, $W(\mu,\tfrac{1}{m})$ rather than $O(\{x\},\tfrac{1}{m})$, $W(\{\mu\},\tfrac{1}{m})$.
If $\overline{x}\in K^k$ is a finite sequence of length $k$ consisting of elements of $K$ by $O(\overline{x},\tfrac{1}{m})$ we will mean the set
$O(F,\tfrac{1}{m})$, where $F$ is a set of elements of the sequence $\overline{x}$.

For $\mu\in M(L)$ and $n\in \NN$ we put
$$\overline{W}(\mu,\tfrac{1}{n})=\{f\in C_w(L): \lvert \mu(f) \rvert\leq \tfrac{1}{n}\}.$$

For a normed space $X$ by $B_X$ we denote the closed unit ball centered at $0$, i.e. $B_X=\{x\in X:\lVert x \rVert\leq 1\}$.

\section{Some simple observations}
Let us recall that the weak and the pointwise topology are related in the following way: For a compact space $K$, $C_w(K)$ is linearly homeomorphic to a closed linear subspace of $C_p(B_{M(K)})$ (here the unit ball $B_{M(K)}$ of the space of measures on $K$ is equipped with the weak* topology). An appropriate embedding is given by the standard evaluation map $i(f)(\mu) = \mu(f)$ for  $f\in C(K), \mu\in B_{M(K)}$.\medskip

In this section we will describe a few instances when the answer to Problem \ref{problem1} is immediate.

\begin{enumerate}[(A)]\itemsep5pt
 \item Let $K$ be an infinite countable compact space. Then $C_p(K)$ and $C_w(K)$ are not homeomorphic because in that case the pointwise topology
 is metrizable whereas the weak one is not.
 
 \item Suppose that $K$ is infinite and $|K| < |M(K)|$. %Recall that for a space $X$ and $x\in X$ by $\xi(x,X)$ we denote the minimal cardinality of the local base at $x$. We put $\xi(X)=\sup\{\xi(x,X):x\in X\}$.
 We have $|K|=\chi(C_p(K))$ and $\chi(C_w(K))=|M(K)|$, where $\chi(X)$ is a character of a topological space $X$ (see \cite[p. 13]{Tkachuk}).
 So in that case the character
 distinguishes between the pointwise and the weak topology. Let us remark that $|K| < |M(K)|$ if the cofinality of $|K|$ is countable.
 
 \item\footnote{This case was pointed out to us by Grzegorz Plebanek}
 Suppose that $K$ is a non-separable compact space such that there is a family $\{\mu_n\in M(K):n\in\omega\}$ of functionals separating
 elements of $C(K)$ (equivalently, there is a linear continuous injection $T:C(K)\to \ell_\infty$). Then $\psi(C_p(K))=d(K)>\omega$ %popr. 4
 (see \cite[173]{Tkachuk}) and $\psi(C_w(K))=\omega$, where $\psi(X)$
 is a pseudocharacter of a space $X$ (see \cite[p. 13]{Tkachuk}) and $d(X)$ is its density (i.e. the minimal cardinality of a dense subset of $X$).
 
 The concrete example of a space having the property described here is $St(\mathcal{M})$, i.e., the Stone space of the measure algebra associated with the Lebesgue measure $\lambda$ on $[0,1]$. Indeed, it is well-known
 that $St(\mathcal{M})$ is not separable and $C(St(\mathcal{M}))\approx L_\infty([0,1])\approx \ell_\infty$.
 
 \item Let $K$ be an infinite scattered space. Then $C_p(K)$ is Fr\'echet-Urysohn and $C_w(K)$ is not. In Section 5 we shall examine this situation more closely.
\end{enumerate}

It is also not difficult to prove that $C_w(K)$ and $C_p(L)$ are never uniformly homeomorphic, provided $K$ and $L$ are infinite compacta.

\begin{fact} For any infinite compact spaces $K$ and $L$ the spaces $C_w(K)$ and $C_p(L)$ are not uniformly homeomorphic.
\end{fact}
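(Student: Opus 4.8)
The plan is to exhibit a \emph{uniform} invariant that separates the two spaces, namely the cofinality of the family of bounded sets. Suppose, towards a contradiction, that $h\colon C_w(K)\to C_p(L)$ is a uniform homeomorphism. The starting point is the standard fact that a uniformly continuous map between locally convex spaces carries (von Neumann) bounded sets to bounded sets: if $f$ is uniformly continuous, $V$ is a convex balanced neighbourhood of $\underline0$ in the target, and $B$ is bounded, then choosing a balanced neighbourhood $U$ with $f(a)-f(a')\in V$ whenever $a-a'\in U$, and an $N$ with $\tfrac1N(B-b_0)\subseteq U$ for a fixed $b_0\in B$, the telescoping identity $f(b)-f(b_0)=\sum_{k=1}^N\bigl(f(p_k)-f(p_{k-1})\bigr)$ along the segment $p_k=b_0+\tfrac{k}{N}(b-b_0)$ gives $f(B)-f(b_0)\subseteq NV$; hence $f(B)$ is bounded. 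Applying this to both $h$ and $h^{-1}$ shows that the map $B\mapsto h(B)$ is an order isomorphism between the directed posets of bounded subsets of $C_w(K)$ and of $C_p(L)$, ordered by inclusion. In particular these two posets have the same cofinality.

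Next I would identify the bounded sets on each side. In $C_w(K)$ a set is bounded precisely when it is weakly bounded, and by the Banach--Steinhaus theorem this is the same as being norm bounded; thus the balls $\{n\,B_{C(K)}:n\in\NN\}$ form a countable cofinal family and the cofinality equals $\aleph_0$. In $C_p(L)$, on the other hand, a set $B$ is bounded exactly when it is pointwise bounded, i.e. $\sup_{f\in B}|f(x)|<\infty$ for every $x\in L$. I claim this poset has uncountable cofinality, and this discrepancy is the contradiction.

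The heart of the argument is therefore the claim that the pointwise bounded subsets of $C(L)$ do not admit a countable cofinal family. Given any sequence $(B_n)$ of bounded sets, put $c_n(x)=\sup_{f\in B_n}|f(x)|<\infty$. Using that every infinite Hausdorff space contains a sequence $(U_k)$ of pairwise disjoint nonempty open sets (built inductively around a non-isolated point via the Hausdorff axiom, or taken from an infinite set of isolated points), pick $x_k\in U_k$ and, by normality of $L$, a continuous $g_k\colon L\to[0,c_k(x_k)+1]$ with $g_k(x_k)=c_k(x_k)+1$ and $g_k\equiv\underline0$ off $U_k$. Since the supports are pairwise disjoint, the set $B=\{g_k:k\in\omega\}$ is pointwise bounded, hence bounded in $C_p(L)$; but $|g_k(x_k)|>c_k(x_k)$ gives $g_k\notin B_k$, so $B\not\subseteq B_k$ for every $k$. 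Thus no countable family is cofinal, the cofinality is at least $\aleph_1$, and this contradicts the equality of cofinalities established above. The main obstacle is exactly this construction: on an arbitrary infinite compact space, possibly without nontrivial convergent sequences (such as $\beta\omega$), one must produce a single bounded set escaping every member of a prescribed countable family, and the pairwise-disjoint-supports trick is precisely what makes the pointwise boundedness of $B$ automatic. The auxiliary point that uniformly continuous maps preserve boundedness, although standard, is the other ingredient that must be in place.
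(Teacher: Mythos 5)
Your proof is correct, but it is organized around a different invariant than the paper's, so a comparison is worthwhile. Both arguments share the same two concrete ingredients: the telescoping trick along the segment $p_k=b_0+\tfrac{k}{N}(b-b_0)$ (the paper uses exactly this to upgrade uniform continuity to the estimate $\Phi(W(F_y,\tfrac{m}{n_y}))\subseteq O(y,m)$), and a family of functions on $L$ with pairwise disjoint supports and prescribed large values. Where you diverge is in how the contradiction is extracted. You prove an abstract lemma (uniformly continuous maps between locally convex spaces preserve von Neumann bounded sets), conclude that a uniform homeomorphism induces an order isomorphism between the inclusion posets of bounded sets, and separate the two posets by cofinality: countable on the $C_w(K)$ side, since weakly bounded equals norm bounded and the balls $nB_{C(K)}$ are cofinal, and uncountable on the $C_p(L)$ side by your disjoint-support construction. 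The paper instead builds a null sequence $(g_k)\to\underline{0}$ in $C_p(L)$ with $g_k(y_k)=kn_{y_k}$, notes that $A=\{g_k:k\in\NN\}\cup\{\underline{0}\}$ is compact, so $\Phi^{-1}(A)$ is compact and hence norm bounded in $C_w(K)$, and contradicts this with the quantitative bound $\lVert\Phi^{-1}(g_k)\rVert\geq k$ coming from the telescoping estimate. The trade-off is real: the paper's route uses uniform continuity of $\Phi$ only --- the inverse enters merely as a continuous map, through compactness --- and therefore proves the stronger statement, recorded in the remark following the proof, that there is no uniformly continuous homeomorphism $C_w(K)\to C_p(L)$ even when its inverse is just continuous. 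Your route genuinely needs uniform continuity in both directions, since the isomorphism of bornologies requires $h^{-1}$ to preserve bounded sets as well. What your argument buys in exchange is a cleaner, modular principle: under uniform homeomorphism the bornology, and hence its cofinality, is an invariant, and this single cardinal ($\aleph_0$ versus at least $\aleph_1$) separates $C_w(K)$ from $C_p(L)$ for all infinite compacta.
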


\begin{proof} Assume towards a contradiction that, for some infinite compact spaces $K$ and $L$, there exists a uniform homeomorphism $\Phi: C_w(K) \to C_p(L)$. Without loss of generality we may assume that $\Phi(\underline{0}) = \underline{0}$.
From the uniform continuity of  $\Phi$ it follows that, for any $y\in L$, there exist a finite set $F_y\subseteq B_{M(K)}$ and $n_y\in \mathbb{N}$ such that
\begin{equation}\label{u1}
(\forall f,g\in C_w(K))\ (f-g)\in W(F_y,\tfrac{1}{n_y}) \Rightarrow (\Phi(f)-\Phi(g))\in O(y,1) .
\end{equation}
Then 
\begin{equation}\label{u2}
(\forall m\in \mathbb{N})\ \Phi(W(F_y,\tfrac{m}{n_y})) \subseteq O(y,m) .
\end{equation}
Indeed, for any $f\in W(F_y,\tfrac{m}{n_y})$ we take $f_k = \tfrac{k}{m}f$, for $k= 0,1,\dots,m$. Then $(f_{k+1} - f_k)\in W(F_y,\tfrac{1}{n_y})$,  therefore by
(\ref{u1}) $|\Phi(f_{k+1})(y) - \Phi(f_{k})(y)|< 1$ for any $k<m$. Since $\Phi(f_{0}) = \Phi(\underline{0}) = \underline{0}$, it follows that $|\Phi(f)(y)| = |\Phi(f_{m})(y)|< m$.

Since $L$ is infinite, we can find a sequence $(V_k)_{k\in \mathbb{N}}$ of nonempty, open pairwise disjoint subsets of $L$. For each $k\in \mathbb{N}$ pick a point $y_k\in V_k$, and take a continuous function $g_k: L \to [0,\infty)$ such that $g_k(y_k)= kn_{y_k}$ and $g_k^{-1}((0,\infty))\subseteq V_k$. Then the sequence $(g_k)_{k\in \mathbb{N}}$ converges to $\underline{0}$ in $C_p(L)$, hence the set $A = \{g_k: k\in \mathbb{N}\}\cup \{\underline{0}\}$ is compact in $C_p(L)$. Therefore $\Phi^{-1}(A)$ is compact in $C_w(K)$, so it is norm-bounded. On the other hand, by (\ref{u2}), we have  $\Phi^{-1}(g_k)\notin W(F_{y_k},\tfrac{kn_{y_k}}{n_{y_k}}) = W(F_{y_k},k)$, hence there is $\mu\in F_{y_k}$ such that $|\mu(\Phi^{-1}(g_k))|\ge k$. Since $\|\mu\|\le 1$, we have $\|\Phi^{-1}(g_k)\|\ge k$, a contradiction.
\end{proof}
One can easily verify that the above argument actually shows that for any infinite compact spaces $K$ and $L$ there is no homeomorphism $\Phi: C_w(K) \to C_p(L)$ which is uniformly continuous, i.e., we did not use the uniform continuity of $\Phi^{-1}$ in the proof.

\begin{col}
If $K$ and $L$ are infinite compact spaces,  then the spaces $C_w(K)$ and $C_p(L)$ are not linearly homeomorphic.
\end{col}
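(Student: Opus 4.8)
The plan is to reduce this to the preceding Proposition by showing that every linear homeomorphism between $C_w(K)$ and $C_p(L)$ is automatically a uniform homeomorphism. Both spaces are topological vector spaces, and the uniformity implicit in the Proposition is the canonical additive one: its entourages are the translation-invariant sets $\{(f,g):f-g\in U\}$, where $U$ ranges over neighborhoods of $\underline 0$ (for instance the basic sets $W(F,\tfrac1n)$ and $O(F,\tfrac1m)$ used in that proof).

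Suppose toward a contradiction that $\Phi\colon C_w(K)\to C_p(L)$ is a linear homeomorphism; note that linearity forces $\Phi(\underline 0)=\underline 0$, matching the normalization made in the Proposition. First I would record the elementary fact that any continuous linear map $T$ between topological vector spaces is uniformly continuous for these additive uniformities: given a neighborhood $V$ of $\underline 0$ in the range, continuity of $T$ at the origin yields a neighborhood $U$ of $\underline 0$ in the domain with $T(U)\subseteq V$, and then $f-g\in U$ implies $T(f)-T(g)=T(f-g)\in V$ by linearity, which is precisely uniform continuity. Applying this to $\Phi$ and to $\Phi^{-1}$ (which is again linear and continuous), I conclude that $\Phi$ is a uniform homeomorphism, contradicting the Proposition.

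The argument carries essentially no obstacle; its only content is the standard passage from continuity at the origin to uniform continuity for linear maps, after which the Proposition does all the work. In fact one needs even less: by the one-sided strengthening noted immediately after the Proposition, it suffices to use the uniform continuity of $\Phi$ alone, so the continuity of the inverse map is not required here.
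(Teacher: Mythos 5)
Your proposal is correct and coincides with the paper's intended argument: the corollary is stated without proof precisely because, as you observe, a continuous linear bijection with continuous inverse between topological vector spaces is automatically a uniform homeomorphism for the additive uniformities, so the preceding Proposition applies directly. Your closing remark that one-sided uniform continuity of $\Phi$ already suffices matches the paper's own observation immediately after the Proposition's proof.
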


\section{Compacta containing closed uncountable metrizable subspaces}

It was proved in \cite{Kr} that if $K$ is an infinite metrizable finite-dimensional compactum, then $C_p(K)$ and $C_w(K)$ are not homeomorphic. In this
section we will extend this result to a certain class of non-metrizable compacta (cf. Corollary \ref{Valdivia}).
Recall that a normal space is \textit{strongly countable-dimensional} if it can be represented as a countable union of closed finite-dimensional subspaces.
In particular, any finite-dimensional space is
strongly countable-dimensional.
Let us prove the following.

\begin{thrm}\label{faktor}
Let $K$ be a compact strongly countable-dimensional space and let $L$ be a compact space such that $C_w(L)$ is homeomorphic to $C_w(M)\times E$ %popr.5
for some uncountable metrizable compact space $M$ and a topological space $E$. Then $C_p(K)$ and $C_w(L)$ are not homeomorphic.
\end{thrm}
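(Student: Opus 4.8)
\emph{Reduction to a closed embedding.} I would argue by contradiction. Suppose $h\colon C_p(K)\to C_w(L)$ is a homeomorphism and compose it with the given homeomorphism $C_w(L)\to C_w(M)\times E$. Since $E\neq\emptyset$, fix $e_0\in E$; the slice $C_w(M)\times\{e_0\}$ is closed in $C_w(M)\times E$ and homeomorphic to $C_w(M)$, so its preimage is a closed subspace $S\subseteq C_p(K)$ homeomorphic to $C_w(M)$. Composing with a translation of $C_p(K)$ (a self-homeomorphism) I may assume the resulting embedding $\psi\colon C_w(M)\to S\subseteq C_p(K)$ sends $\underline 0$ to $\underline 0$. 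Thus everything reduces to showing that $C_w(M)$ is \emph{not} homeomorphic to a closed subspace of $C_p(K)$. I would also record a useful simplification: as $M$ is uncountable compact metrizable it contains a copy of the Cantor set, and a Borsuk--Dugundji extension operator makes the corresponding $C(2^\omega)$ a norm-complemented, hence weakly closed, subspace of $C(M)$; so $C_w(2^\omega)$ already embeds as a closed subspace of $C_w(M)$, and one may take $M=2^\omega$.

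\emph{Supports and dimension.} The core should be a dimension-theoretic invariant, which is why $\ind$ and $\St$ are the relevant tools. For each $x\in K$ the map $f\mapsto\psi(f)(x)$ is a weakly continuous real function on $C_w(M)$ vanishing at $\underline 0$, so from the definition of the weak topology, continuity at $\underline 0$ yields, for every $m$, a finite set of measures $F\subseteq B_{M(M)}$ and $k\in\NN$ with $\psi(W(F,\tfrac1k))\subseteq O(x,\tfrac1m)$. Gathering these over $m$ attaches to each point $x$ a countable \emph{support} $\supp(x)\subseteq B_{M(M)}$, while continuity of $\psi^{-1}$ dually controls $\psi$ near $\underline 0$ by finitely many points of $K$ for each basic weak neighbourhood. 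I would then invoke strong countable-dimensionality: write $K=\bigcup_n K_n$ with each $K_n$ closed and $\dim K_n<\infty$, and on each $K_n$ replace the covers generated by the support-neighbourhoods by open refinements of finite order, i.e.\ control the stars $\St(\,\cdot\,)$. Finite covering dimension of $K_n$ bounds the order of these refinements, and thereby bounds the number of independent ``weak directions'' of $C_w(M)$ that $\psi$ can resolve near any point of $K_n$.

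\emph{The clash with the size of $M$.} Against this finite-order, countably supported picture I would play the perfect set inside $M$: from it one extracts a bounded family in $C(M)$ whose weak structure is ``infinite-dimensional'' in the sense precisely dual to the bound above, together with measures realising arbitrarily large finite orders. Transported through $\psi$, such a configuration would force some $K_n$ to carry a configuration of order exceeding $\dim K_n$, the desired contradiction. This is also the step that recovers and extends the metrizable case of \cite{Kr}: there $K$ is itself a $C$-space and the argument runs directly, whereas here the countable decomposition $K=\bigcup_n K_n$ reduces the possibly non-metrizable $K$ to its finite-dimensional closed pieces, and the product hypothesis on $C_w(L)$ supplies the metrizable $M$ on which the witness is built.

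\emph{Main obstacle.} The heart of the matter is making the two halves quantitatively compatible: converting $\dim K_n<\infty$ into an honest upper bound on the complexity of $\psi$ on a neighbourhood in $K_n$ through finite-order covers and stars, and converting ``$M$ uncountable metrizable'' into a witness that exceeds every such bound. The delicate points are that a single $x\in K$ may require infinitely many measures across scales (to be absorbed by the countable bookkeeping of $\supp(x)$ together with the countable union $K=\bigcup_n K_n$), and that the weak topology must be analysed on norm-bounded sets, where it is countably determined, so that the local finite supports genuinely control $\psi$. I expect the dimension estimate linking the supports to $\dim K_n$ to be the principal difficulty; it is exactly the place where strong countable-dimensionality, rather than mere infinite-dimensionality as for $K=[0,1]$ (where Problem \ref{problem1} remains open), is indispensable.
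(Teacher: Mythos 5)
Your opening move --- replacing the hypothesis ``$C_p(K)$ is homeomorphic to $C_w(M)\times E$'' by ``$C_w(M)$ is homeomorphic to a \emph{closed subspace} of $C_p(K)$'' --- is where the proposal breaks. The deduction itself is correct (the slice is closed since $E$ is Tychonoff), but it discards exactly the information that any known proof of Theorem \ref{faktor} uses: a map defined on \emph{all} of $C_p(K)$ and \emph{onto} the product. The statement you reduce to is strictly stronger than the theorem, is not proved in the paper, and your sketch does not prove it either. To see concretely why the loss is fatal, look at the combinatorial core of the paper's argument (Claim 2 there): for a point $y$ of the image one has infinitely many $k$-element sets $F_i\subseteq K$ with $\Phi\left(O\left(F_i,\tfrac{1}{m}\right)\right)\subseteq \overline{W}(\delta_y,1)\times E$; the $\Delta$-system lemma (\cite[A.1.4]{vM}) yields a root $A_0$ and disjoint petals $A_i$, and the punchline evaluates $\Phi$ at a function $h\in C_p(K)$ glued so that $h\upharpoonright A_0\cup B=f_0$ and $h\upharpoonright A_i=0$. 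If all you have is an embedded copy $S=\psi(C_w(M))\subseteq C_p(K)$, such a glued $h$ has no reason to lie in $S$, so the embedding gives you no control over it; moreover the case $k=1$ of that claim needs surjectivity of $\Phi$ outright. Your later steps (``supports'', ``finite-order covers'', ``stars'', ``configurations of order exceeding $\dim K_n$'') are programmatic labels, not arguments: no invariant is defined and no lemma is stated that could convert $\dim K_n<\infty$ into a bound that a witness inside $M$ would violate.

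The paper's actual route also shows that your normalization $M=2^\omega$ points in the wrong direction. By Miljutin's theorem \cite{Miljutin} one replaces $C_w(M)$ by $C_w(Q)$, $Q$ the Hilbert cube, precisely because the contradiction is found \emph{in the image}, not in $K$: one defines the closed sets
$Z_{k,m}=\{(\overline{x},y)\in K^k\times Q: \Phi\left(O\left(\overline{x},\tfrac{1}{m}\right)\right)\subseteq \overline{W}(\delta_y,1)\times E\}$,
shows the fibers of $\pi_1\upharpoonright Z_{k,m}$ are countable (if $(\overline{x},y)\in Z_{k,m}$ then $\delta_y\in\vspan(\mu_1,\ldots,\mu_n)$ for the finitely many measures witnessing that $\Phi$ is open at $\underline{0}$, so $y$ is an atom of some $\mu_i$), shows the fibers of $\pi_2\upharpoonright Z_{k,m}$ are finite (the $\Delta$-system argument above), and then applies Engelking's dimension theory for closed maps with countable, respectively finite, fibers (\cite[5.4.7]{En2}, \cite[5.4.A (d)]{En2}) to conclude that each $C(k,m)=\pi_2(Z_{k,m})$ is strongly countable-dimensional. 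Continuity of $\Phi$ at $\underline{0}$ gives $Q=\bigcup_{k,m}C(k,m)$, contradicting the strong infinite-dimensionality of $Q$. Passing to the Cantor set, as you propose, makes the image zero-dimensional and thereby destroys the only dimension-theoretic contradiction available on that side; and your alternative plan of locating the contradiction inside some $K_n$ is never instantiated. So both halves of the proposal --- the reduction and the core estimate --- have genuine gaps.
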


\begin{proof}
By Miljutin's theorem \cite{Miljutin} (cf. \cite[4.4.8]{AK}) $C_w(M)$ is (linearly) homeomorphic to $C_w(Q)$, where $Q$ denotes the Hilbert cube.

Striving for a contradiction, let us assume that $C_p(K)$ and $C_w(L)$ are homeomorphic. Hence, from what we observed above, there is a homeomorphism
$\Phi:C_p(K)\to C_w(Q)\times E$. Without loss of generality we may assume that $\Phi(\underline{0})=(\underline{0},e)$, where  $e$ is a fixed point in $E$. For $k,m\in\mathbb{N}$, let
$$Z_{k,m}=\{(\overline{x},y)\in K^k\times Q: \Phi\left(O\left(\overline{x},\tfrac{1}{m}\right)\right)\subseteq \overline{W}(\delta_y,1)\times E\}.$$

One can easily show that for any $k,m\in\mathbb{N}$, the complement of $Z_{k,m}$ is open in $K^k\times Q$ and hence $Z_{k,m}$ is closed in $K^k\times Q$
(see \cite[Proposition 1.2]{Ok}).
By $\pi_1:K^k\times Q\to K^k$ and $\pi_2:K^k\times Q\to Q$ we denote the respective projections.

\medskip

\textbf{Claim 1.} For any $\overline{x}\in K^k$ the fiber $\pi_1^{-1}(\overline{x})\cap Z_{k,m}$ is at most countable (hence zero-dimensional).
\begin{proof}
Fix $\overline{x}\in K^k$. Since $\Phi:C_p(K)\to C_w(Q)\times E$ is a homeomorphism, there are a natural number $n\in\NN$, measures $\mu_1,\ldots ,
\mu_n\in M(Q)$ and an open set $U\subseteq E$ containing $e$ such that
\begin{equation}\label{(1)}
\Phi\left(O\left(\overline{x},\tfrac{1}{m}\right)\right)\supseteq W\left(\mu_1,\ldots ,\mu_n,\tfrac{1}{n}\right)\times U.
\end{equation}
Let $y\in \pi_1^{-1}(\overline{x})\cap Z_{k,m}$. We claim that $\delta_y\in\vspan(\mu_1,\ldots,\mu_n)$. Indeed, otherwise
$$\bigcap\{\Ker(\mu_i):i\leq n\}\nsubseteq \Ker(\delta_y),$$
where $\Ker(\nu)$ denotes the kernel of
a functional $\nu$ (cf. \cite[Lemma 3.9]{F}).
This means that
there is a continuous function $g:Q\to\mathbb{R}$ such that $\delta_y(g)=g(y)\neq 0$ and $\mu_i(g)=0$, for any $i\leq n$.
Scaling $g$ if necessary, we have
\begin{equation}\label{(2)}
\delta_y(g)=g(y)=2 \text{ and } g\upharpoonright \vspan(\mu_1,\ldots,\mu_n)=0.
\end{equation}

It follows from \eqref{(1)} that
$$\Phi^{-1}(g,e)\in O\left(\overline{x},\tfrac{1}{m}\right).$$
Since $y\in \pi_1^{-1}(\overline{x})\cap Z_{k,m}$, we have $$\Phi(\Phi^{-1}(g,e))=(g,e)\in \overline{W}(\delta_y,1)\times E.$$
Therefore $|\delta_y(g)|\leq 1$, contradicting \eqref{(2)}.

We have proved that if $y\in \pi_1^{-1}(\overline{x})\cap Z_{k,m}$, then $\delta_y\in\vspan(\mu_1,\ldots,\mu_n)$. However this means that such $y$ is an atom
of one of the measures $\mu_1,\ldots ,\mu_n$. Hence the set $\pi_1^{-1}(\overline{x})\cap Z_{k,m}$ is included in the countable set of atoms of measures
$\mu_1,\ldots ,\mu_n$.
\end{proof}

\medskip

\textbf{Claim 2.} For any $y\in Q$ the fiber $\pi_2^{-1}(y)\cap Z_{k,m}$ is finite.
\begin{proof}
Let $C(k,m)=\pi_2(Z_{k,m})$ and $E(k,m)=C(k,m)\setminus C(k-1,m)$ for $k>1$, $E(1,m)=C(1,m)$. To prove Claim 2 it is enough to show that for any $y\in E(k,m)$ the set
$\pi_2^{-1}(y)\cap Z_{k,m}$ is finite.

Striving for a contradiction, assume that $\pi_2^{-1}(y)\cap Z_{k,m}$ is infinite for some $y\in E(k,m)$. This implies that there are infinitely many
$k$-element sets $F_1,F_2,\ldots \subseteq K$ such that
\begin{equation}\label{(4)}
\Phi\left(O\left(F_i,\tfrac{1}{m}\right)\right)\subseteq \overline{W}(\delta_y,1)\times E, 
\end{equation}
for any $i\in \NN$. By a suitable version of $\Delta$-system lemma (see \cite[A.1.4]{vM}), we can assume that there is $A_0\subseteq K$ with $|A_0|\leq k-1$
and pairwise disjoint sets $A_1,A_2,\ldots \subseteq K$ such that $A_0\cup A_i=F_i$, for any $i\in \NN$.

We will prove that
\begin{equation}\label{(3)}
\Phi\left(O\left(A_0,\tfrac{1}{m}\right)\right)\subseteq \overline{W}(\delta_y,1)\times E 
\end{equation}
and this will be a desired contradiction since $|A_0|\leq k-1$ and $y\in E(k,m)=C(k,m)\setminus C(k-1,m)$ if $k>1$ (If $k=1$ we have
a contradiction with the surjectivity of $\Phi$).

If \eqref{(3)} does not hold, there is $f_0\in O\left(A_0,\tfrac{1}{m}\right)$ with $\Phi(f_0)=(f_1,f_2)$ such that $|\delta_y(f_1)|=|f_1(y)|>1$.
The set $$\Phi^{-1}\left(\{f\in C_w(Q): |\delta_y(f)|>1\}\times E\right)$$
is an open neighborhood of $f_0$, hence there is a finite set $B\subseteq K$ and $\varepsilon>0$ such that for $g\in C_p(K)$
\begin{equation}\label{(5)}
\lVert g\upharpoonright B - f_0\upharpoonright B \rVert<\varepsilon\; \text{ implies }\; \Phi(g)\in \{f\in C_w(Q): |\delta_y(f)|>1\}\times E,
\end{equation}
where $\lVert \cdot \rVert$ is the supremum norm.

Since $A_1,A_2,\ldots$ is an infinite collection of pairwise disjoint sets, there is $i\in\NN$ with $A_i\cap B=\emptyset$. Let $h\in C_p(K)$ be a function
such that $h\upharpoonright A_0\cup B=f_0$ and $h\upharpoonright A_i=0$.
Let $\Phi(h)=(h_1,h_2)$.
By \eqref{(4)}, $|\delta_y(h_1)|\leq 1$. But, on the
other hand $\delta_y(h_1)>1$ by \eqref{(5)}, a contradiction. We proved \eqref{(3)}, and as we have explained this ends the proof of Claim 2. 
\end{proof}
Since $K$ is strongly countable-dimensional compact, for any $k\in\NN$ the space $K^k$ is strongly countable-dimensional as well.
The set $Z_{k,m}$ is compact for any $k,m\in \NN$ and hence
the mapping $\pi_1\upharpoonright Z_{k,m}$ is closed.
By Claim 1 and \cite[5.4.7]{En2}), the set $Z_{k,m}$ is strongly countable-dimensional. Now, the mapping
$\pi_2\upharpoonright Z_{k,m}$ is closed and hence Claim 2 and \cite[5.4.A (d)]{En2}) imply that the set
$C(k,m)=\pi_2(Z_{k,m})$ is strongly countable-dimensional. By the continuity of $\Phi$ it follows that $Q=\bigcup\{C(k,m):k,m\in \NN\}$.
However $Q$ is strongly infinite-dimensional and thus cannot be a countable union of finite-dimensional subspaces. 
\end{proof}

\begin{col}\label{metr_podp}
If $K$ is a compact strongly countable-dimensional space and $L$ is a compact space containing a closed uncountable metrizable subspace,
then $C_p(K)$ and $C_w(L)$ are not homeomorphic.
\end{col}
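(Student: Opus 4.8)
The goal is to deduce Corollary \ref{metr_podp} from Theorem \ref{faktor}, so the plan is to verify that the hypotheses of the corollary imply those of the theorem. The only gap to bridge is the following: Theorem \ref{faktor} requires that $C_w(L)$ splits as a product $C_w(M)\times E$ with $M$ an uncountable metrizable compactum, whereas the corollary merely assumes that $L$ contains a closed uncountable metrizable subspace $M$. Thus the heart of the argument is to produce such a product decomposition from a closed metrizable copy of $M$ inside $L$.

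First I would let $M\subseteq L$ be the closed uncountable metrizable subspace guaranteed by hypothesis. The natural tool is the restriction map $R:C(L)\to C(M)$ sending $f\mapsto f\upharpoonright M$. Since $M$ is closed in the compact (hence normal) space $L$, the Tietze extension theorem shows that $R$ is surjective, and it is clearly a bounded linear operator. The plan is to exhibit a bounded linear right inverse (a linear extension operator) $S:C(M)\to C(L)$ with $R\circ S=\mathrm{id}_{C(M)}$; this makes $C(M)$ linearly complemented in $C(L)$. Concretely I would invoke the classical fact that for a metrizable (indeed, separable metrizable) closed subset $M$ of a compact space $L$ there is a bounded linear extension operator, so that $C(L)$ is linearly homeomorphic to $C(M)\times\Ker(R)$ as Banach spaces.

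A linear isomorphism of Banach spaces is in particular a linear homeomorphism for the weak topologies, since the weak topology is determined by the dual pairing and any linear homeomorphism induces a dual isomorphism. Hence the decomposition $C(L)\cong C(M)\oplus \Ker(R)$ yields a linear homeomorphism $C_w(L)\cong C_w(M)\times E$, where $E$ denotes the closed subspace $\Ker(R)=\{f\in C(L):f\upharpoonright M=\underline{0}\}$ equipped with the relative weak topology. Here $M$ is an uncountable metrizable compact space, exactly as required. With this product representation in hand, Theorem \ref{faktor} applies directly (with $K$ the given strongly countable-dimensional compactum) and yields that $C_p(K)$ and $C_w(L)$ are not homeomorphic, which is the assertion of the corollary.

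The main obstacle is establishing the existence of the bounded linear extension operator $S$, i.e. the complementation of $C(M)$ in $C(L)$. For a closed metrizable subspace of a compact space this is a standard result (one may appeal, for instance, to the averaging/extension techniques used for Miljutin-type theorems, or more elementarily to the fact that a metrizable compactum is an absolute retract only up to the linear-operator level, which requires the linearity of the extension rather than mere continuity). I would therefore cite the appropriate reference for the existence of a linear continuous extension operator $C(M)\to C(L)$ when $M$ is a closed metrizable subset of a compact space $L$; granting that, every remaining step is the routine transfer of a linear Banach-space splitting to the weak topologies, and the conclusion follows immediately from Theorem \ref{faktor}.
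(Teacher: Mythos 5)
Your proposal is correct and follows essentially the same route as the paper: the paper also obtains the splitting $C_w(L)\cong C_w(M)\times\{f\in C_w(L): f\upharpoonright M=\underline{0}\}$ from a linear continuous extension operator $e:C(M)\to C(L)$ (citing Lindenstrauss–Tzafriri, the Borsuk–Dugundji extension theorem) and then applies Theorem \ref{faktor}. The one fact you left as a citation to be filled in is exactly the fact the paper cites, so there is no gap.
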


\begin{proof}
Let $M\subseteq L$ be a closed uncountable metrizable subspace of $K$. It is well known, that $C_w(L)$ is (linearly) homeomorphic to
$C_w(M)\times \{f\in C_w(L): f\upharpoonright M =0\}$. This follows from the fact that if a compact space $M\subseteq L$ is metrizable,
then there exists a linear continuous extension operator $e:C(M)\to C(L)$ (see \cite[II.4.14]{LT}) which gives an isomorphism between
$C(L)$ and $C(M)\times  \{f\in C(L): f\upharpoonright M =0\}$ (see \cite[page 89]{AK}). 

The above factorization allows us to apply Theorem \ref{faktor}.
\end{proof}
Obviously, the assumption that the space $L$ contains a closed uncountable metrizable subspace is equivalent to the condition that $L$ contains a topological copy of the Cantor set.

Since any finite-dimensional space is strongly countable-dimensional we have the following.

\begin{col}\label{metr_podprz}
If $K$ is a compact finite-dimensional space and $L$ is a compact space containing a closed uncountable metrizable subspace, then $C_p(K)$ and $C_w(L)$
are not homeomorphic.
\end{col}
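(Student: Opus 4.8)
The plan is to deduce this corollary directly from Corollary \ref{metr_podp}, since the only difference between the two statements lies in the hypothesis imposed on $K$. In Corollary \ref{metr_podp} one assumes that $K$ is compact and strongly countable-dimensional, whereas here we assume merely that $K$ is compact and finite-dimensional; the hypothesis on $L$ (containing a closed uncountable metrizable subspace) is verbatim the same in both statements. So the whole task reduces to checking that the finite-dimensionality assumption is strong enough to apply the earlier corollary.

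First I would recall the elementary observation already recorded in the discussion preceding Theorem \ref{faktor}: every finite-dimensional normal space is strongly countable-dimensional. This is immediate from the definition, since a strongly countable-dimensional space is one that can be written as a countable union of closed finite-dimensional subspaces, and a finite-dimensional space trivially admits such a representation, namely the one-term union consisting of the space itself. Consequently, any compact finite-dimensional space $K$ is in particular compact and strongly countable-dimensional, so it satisfies the hypotheses placed on $K$ in Corollary \ref{metr_podp}.

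Having made this reduction, I would simply invoke Corollary \ref{metr_podp} with this $K$ and the given $L$ to conclude that $C_p(K)$ and $C_w(L)$ are not homeomorphic. There is no genuine obstacle here: all the substantive work has already been carried out in Theorem \ref{faktor} and Corollary \ref{metr_podp}, and the present statement is a pure specialization of the latter. The only point to verify is the routine implication that finite-dimensionality implies strong countable-dimensionality, after which the desired conclusion is immediate.
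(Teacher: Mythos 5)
Your proposal is correct and matches the paper exactly: the paper derives this corollary from Corollary \ref{metr_podp} with the single remark that any finite-dimensional space is strongly countable-dimensional, which is precisely your reduction.
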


Given a set  $\Gamma$ we use the standard notation $\Sigma(\Gamma)$ for the $\Sigma$-product of real lines indexed by $\Gamma$, i.e., the subspace of the product $\mathbb{R}^\Gamma$ constisting of functions with countable supports. Let us recall that a compact space $K$ is called a Valdivia compact space if, for some set $\Gamma$, there exists an embedding $i: K\to \mathbb{R}^\Gamma$ such that the intersection $i(K)\cap \Sigma(\Gamma)$ is dense in $i(K)$. The following fact is probably known; since we could
not find a proper reference in the literature, we shall enclose a proof here. The argument
presented below was communicated to the authors by Grzegorz Plebanek.

\begin{fact}\label{Valdivia_dychotomia} 
Every Valdivia compact space is either scattered or contains a closed uncountable metrizable subspace.
\end{fact}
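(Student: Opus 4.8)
The plan is to prove the contrapositive flavour: if a Valdivia compact $K$ is not scattered, then it contains a copy of the Cantor set, which by the remark preceding this Proposition is the same as containing a closed uncountable metrizable subspace. Fix an embedding $K\subseteq [0,1]^\Gamma$ (rescaling into the cube) with $D:=K\cap\Sigma(\Gamma)$ dense in $K$. The starting point is that bounded support produces metrizable subspaces: for a countable $B\subseteq\Gamma$ the set $K_B:=\{x\in K:\supp(x)\subseteq B\}$ is closed in $K$ (a coordinatewise limit of functions vanishing off $B$ again vanishes off $B$), and the projection $\pi_B$ is injective on $K_B$, so $K_B$ embeds into the metrizable cube $[0,1]^B$ and is therefore compact metrizable. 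Since every point of $D$ has countable support, $D=\bigcup\{K_B:B\in[\Gamma]^{\le\omega}\}$, so these metrizable pieces exhaust a dense subset of $K$. Thus the whole problem reduces to producing a \emph{single} countable $B$ for which $K_B$ is uncountable: an uncountable compact metrizable space contains a Cantor set by the classical perfect-set theorem, and that Cantor set is then a closed metrizable subspace of $K$.

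To arrange that some $K_B$ is uncountable I would use a closing-off (reflection) argument. The key technical ingredient, standard in the structure theory of Valdivia compacta, is that one can choose $B$ closed under the structure of $K$ so that sending each $x\in K$ to the unique point of $K_B$ agreeing with $x$ on $B$ defines a continuous retraction $r_B:K\to K_B$; moreover these retractions can be taken to form a $\sigma$-continuous increasing family indexed by countable sets, meaning that for an increasing sequence $B_1\subseteq B_2\subseteq\cdots$ with union $B_\infty$ one has $r_{B_\infty}(x)=\lim_n r_{B_n}(x)$ and $K_{B_\infty}=\cl\big(\bigcup_n K_{B_n}\big)$. Concretely, a suitable $B$ can be produced as $\Gamma\cap M$ for a countable elementary submodel $M$ containing $K$, $\Gamma$ and the data below.

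Now assume $K$ is not scattered and fix a continuous surjection $\phi:K\to[0,1]$, which exists by the characterization of scatteredness recalled in Section~2. Because $\bigcup_B K_B$ is dense and $\phi$ is continuous, $\bigcup_B \phi(K_B)$ is dense in $[0,1]$, so for every $n$ there is a countable $B$ with $\phi(K_B)$ being $\tfrac1n$-dense in $[0,1]$. Using directedness I would build an increasing sequence $B_1\subseteq B_2\subseteq\cdots$ with $\phi(K_{B_n})$ being $\tfrac1n$-dense and set $B_\infty=\bigcup_n B_n$. By $\sigma$-continuity and compactness, $\phi(K_{B_\infty})=\phi\big(\cl(\bigcup_n K_{B_n})\big)\supseteq\cl\big(\bigcup_n\phi(K_{B_n})\big)=[0,1]$, so the compact metrizable space $K_{B_\infty}$ maps onto $[0,1]$ and is in particular uncountable. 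As explained above, this yields the desired Cantor set inside $K$.

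The main obstacle is exactly the passage from a metrizable \emph{quotient} to a metrizable \emph{subspace}. Mapping $K$ onto the uncountable metric compactum $\phi(K)=[0,1]$ is immediate from non-scatteredness, but a continuous surjection onto an uncountable metric compactum does not in general force a Cantor subset in the domain (the split interval maps onto $[0,1]$ yet contains no uncountable metrizable subspace). What makes the argument work is that the Valdivia structure supplies genuine metrizable retracts $K_B\subseteq K$ together with the $\sigma$-continuity that lets non-scatteredness reflect into one of them; verifying the existence and $\sigma$-continuity of the retractions $r_B$ is the real content, whereas the perfect-set theorem and the reflection step above are routine once they are in place.
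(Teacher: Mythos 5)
Your proof is correct, and it takes a genuinely different route from the paper's. The paper also starts from a continuous surjection $\varphi\colon K\to[0,1]$, but it then extends $\varphi$ to a continuous $\Phi\colon\mathbb{R}^\Gamma\to[0,1]$, applies a factorization theorem for maps on products (Engelking, Problem 2.7.12(c)) to obtain a countable $A\subseteq\Gamma$ such that $\Phi(x)$ depends only on $x\upharpoonright A$, and invokes the closing-off lemma of Argyros--Mercourakis--Negrepontis to produce a countable $B\supseteq A$ with $r_B(K)\subseteq K$; this yields $\varphi=\varphi\circ r_B$, so the compact metrizable set $r_B(K)$ maps onto $[0,1]$ and is uncountable. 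You never extend $\varphi$ and never need the AMN lemma: you work directly with the slices $K_B=\{x\in K:\supp(x)\subseteq B\}$ and obtain surjectivity of $\varphi$ on a single slice by a density bootstrap ($\varphi(K_{B_n})$ is $\tfrac1n$-dense, and $\varphi(K_{B_\infty})$ is closed and contains $\bigcup_n\varphi(K_{B_n})$). Your route is more elementary: its only inputs are that each $K_B$ is closed and metrizable, that $\bigcup_B K_B=K\cap\Sigma(\Gamma)$ is dense, and that continuous images of compacta are closed. The paper's route buys an exact identity $\varphi=\varphi\circ r_B$ on a genuine retract, at the price of three nontrivial citations.

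However, your own assessment of where the difficulty lies is backwards, and this is worth fixing. The retraction machinery of your second paragraph is never actually used: in your final chain the only consequence of ``$\sigma$-continuity'' you need is the inclusion $\cl\bigl(\bigcup_n K_{B_n}\bigr)\subseteq K_{B_\infty}$, which is trivial, since each $K_{B_n}\subseteq K_{B_\infty}$ and $K_{B_\infty}$ is closed by your first paragraph. Moreover, as written the appeal is not even legitimate: the equality $K_{B_\infty}=\cl\bigl(\bigcup_n K_{B_n}\bigr)$ holds when the $B_n$ are ``good'' sets (i.e.\ $r_{B_n}(K)\subseteq K$), but your $B_n$, chosen only to make $\varphi(K_{B_n})$ dense, need not be good, and for arbitrary countable $B_n$ the equality can fail: take $K$ to be a convergent sequence $p_n\to p$ in which $\supp(p)$ is infinite, exhausted by the $B_n$ but contained in no single $B_n$, while each $p_n$ has a support point outside $B_\infty$; then every $K_{B_n}$ is empty although $p\in K_{B_\infty}$. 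Luckily only the trivial inclusion enters your argument, so nothing breaks; but once this is noticed, the elementary submodels, the retractions, and even the perfect set theorem (the slice $K_{B_\infty}$ is itself a closed uncountable metrizable subspace, so no Cantor set is needed) can all be deleted, and what you called the ``routine'' reflection step is in fact the entire proof. One last cosmetic point: a coordinatewise rescaling of $\mathbb{R}^\Gamma$ into $[0,1]^\Gamma$ cannot fix $0$ and hence does not preserve supports; rescale into $[-1,1]^\Gamma$ instead, or simply note that your argument works verbatim in $\mathbb{R}^\Gamma$.
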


\begin{proof} Let $K$ be a Valdivia compact space. Without loss of generality we can assume that $K$ is a subset of the product $\mathbb{R}^\Gamma$, such that $K\cap \Sigma(\Gamma)$ is dense in $K$.
If  $K$ is non-scattered, then there exists a continuous surjection $\varphi: K\to [0,1]$. Let $\Phi: \mathbb{R}^\Gamma \to [0,1]$ be a continuous extension of $\varphi$ over $\mathbb{R}^\Gamma$ (cf. \cite[p.\ 368]{vM}). By \cite[Problem 2.7.12(c)]{En1} there is a countable set $A\subset \Gamma$ such that
\begin{equation}\label{Valdivia_dychotomia1}
\forall (x,y \in \mathbb{R}^\Gamma)\ x\upharpoonright A = y\upharpoonright A \Rightarrow \Phi(x) = \Phi(y). 
\end{equation}
For any subset $J$ of $\Gamma$ consider the map $r_J: \mathbb{R}^\Gamma\to \mathbb{R}^\Gamma$ given by
\begin{equation*}
r_J(x)(\gamma)= \begin{cases}
x(\gamma)& \mbox{ for } \gamma\in J,\\
0& \mbox{ for } \gamma\in \Gamma\setminus J.
\end{cases}
\end{equation*}
From \cite[Lemma 1.2]{AMN} it follows that there is a countable set $B$ such that $A\subseteq B\subseteq \Gamma$ and $r_B(K)\subseteq K$. Using (\ref{Valdivia_dychotomia1}) one can easily verify that $\varphi = \varphi\circ r_B$. Therefore $\varphi(r_B(K))= [0,1]$, hence $r_B(K)$ is uncountable. Obviously, $r_B(K)$ is compact and metrizable, since $B$ is countable.
\end{proof}
Since every infinite scattered compact space contains a nontrivial convergent sequence, from the above proposition easily follows the well-known fact that each infinite Valdivia compact space contains a nontrivial convergent sequence cf. \cite[Theorem 3.1.1]{Ka}.

\begin{col}\label{Valdivia}
If $K$ is an infinite finite-dimensional Valdivia compact space, then $C_p(K)$ and $C_w(K)$ are not homeomorphic.
\end{col}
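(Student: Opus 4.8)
The plan is to reduce the statement to the dichotomy established in Proposition \ref{Valdivia_dychotomia} together with the main results already proved, splitting into two cases according to that dichotomy. Since $K$ is an infinite finite-dimensional Valdivia compact space, Proposition \ref{Valdivia_dychotomia} tells us that either $K$ is scattered, or $K$ contains a closed uncountable metrizable subspace. I would treat these two alternatives separately, and in each the conclusion will follow at once from tools developed earlier.

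First, suppose $K$ is scattered. Then, as recorded in observation (D) of Section 3, the space $C_p(K)$ is Fr\'echet-Urysohn (because $K$ is scattered), while $C_w(K)$ is not Fr\'echet-Urysohn (since $K$ is infinite). As the Fr\'echet-Urysohn property is a topological invariant, the two spaces cannot be homeomorphic in this case. Second, suppose $K$ contains a closed uncountable metrizable subspace. Since $K$ is finite-dimensional, I would apply Corollary \ref{metr_podprz} with $L=K$: the hypotheses are satisfied verbatim, namely $K$ is finite-dimensional and $L=K$ contains a closed uncountable metrizable subspace, and the corollary yields directly that $C_p(K)$ and $C_w(K)$ are not homeomorphic.

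I do not expect any genuine obstacle here, since the two cases are exhaustive by the dichotomy and all the substantive work has already been carried out in Theorem \ref{faktor} (together with its consequences, in particular Corollary \ref{metr_podprz}) and in Proposition \ref{Valdivia_dychotomia}. The argument is therefore essentially a bookkeeping step combining the Valdivia dichotomy with the already-established non-homeomorphism results; the only point worth a word of care is to confirm that the two cases genuinely cover all possibilities and that the finite-dimensionality hypothesis on $K$ is exactly what Corollary \ref{metr_podprz} requires in the non-scattered case.
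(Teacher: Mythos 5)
Your proposal is correct and follows essentially the same route as the paper: both invoke the dichotomy of Proposition \ref{Valdivia_dychotomia} and settle the non-scattered case by applying Corollary \ref{metr_podprz} with $L=K$. The only divergence is in the scattered case: the paper cites Corollary \ref{wniosek1}, whose proof distinguishes $C_p$ from $C_w$ by property (B) (via Theorem \ref{B_charakteryzacja} and Proposition \ref{B_weak_general}), whereas you use the Fr\'echet-Urysohn property as in observation (D) of Section 3. Your variant is equally valid --- $C_p(K)$ is Fr\'echet-Urysohn for scattered compact $K$ by the Gerlits--Pytkeev theorem (the equivalence (a)$\Leftrightarrow$(b) of Theorem \ref{B_charakteryzaja_dummy_do_not_use}) --- but note that within this paper the companion fact, that $C_w(K)$ is never Fr\'echet-Urysohn for infinite compact $K$, is itself established through property (B) (Theorem \ref{no_B_and_FU} combined with Proposition \ref{B_weak_general}), so once fully unwound your argument for the scattered case and the paper's coincide; the difference is purely one of bookkeeping, not of substance.

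\begingroup
\makeatletter
\let\B_charakteryzaja_dummy_relax\relax
\makeatother
\endgroup
% Correction: the Gerlits--Pytkeev equivalence referred to above is Theorem \ref{B_charakteryzacja}, conditions (a) and (b).
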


\begin{proof}
If $K$ is not scattered then by Theorem \ref{Valdivia_dychotomia} $K$ contains a closed uncountable metrizable subspace. By Corollary \ref{metr_podprz} we are done.
The case when $K$ is scattered is covered by Corollary \ref{wniosek1} below.
\end{proof}

Let us recall that the double arrow space $\mathbb{K}$ is the set $\mathbb{K}=((0,1]\times \{0\})\cup ([0,1)\times\{1\})$
equipped with the order topology given by the
lexicographical order (i.e., $(s,i)\prec(t,j)$ if either $s<t$, or
$s=t$ and $i<j$).

\begin{fact}\label{arrow}
For the double arrow space $\mathbb{K}$, the function spaces $C_p(\mathbb{K})$ and $C_w(\mathbb{K})$ are not homeomorphic.
\end{fact}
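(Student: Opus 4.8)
The plan is to recognize that the double arrow space $\mathbb{K}$ is a compact space, separable, and first-countable (hence perfectly normal), so I expect it to fit into the framework already developed. The key structural facts I would invoke are that $\mathbb{K}$ is zero-dimensional (its order topology has a base of clopen intervals), hence in particular finite-dimensional, and that $\mathbb{K}$ is not scattered. The latter holds because $\mathbb{K}$ maps continuously onto $[0,1]$ via the projection $(s,i)\mapsto s$; by the characterization recalled in Section~2, a compact space admitting a continuous surjection onto the unit interval is not scattered. Thus $\mathbb{K}$ is a compact, finite-dimensional (indeed zero-dimensional), non-scattered space.

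Given these observations, the strategy is to produce inside $\mathbb{K}$ a closed uncountable metrizable subspace and then simply quote Corollary~\ref{metr_podprz}. Concretely, I would fix an uncountable set $S\subseteq (0,1)$ and consider the subspace $M=\{(s,0):s\in S\}\cup\{(s,1):s\in S\}$, or a suitable closed variant thereof; the subspace topology on such a two-point-per-coordinate set is metrizable (each such point has a countable clopen neighborhood base in $\mathbb{K}$), and one can arrange $M$ to be closed and uncountable. Alternatively, and more cleanly, I would use the non-scattered dichotomy directly: since $\mathbb{K}$ is not scattered, it contains a topological copy of the Cantor set, which by the remark following Corollary~\ref{metr_podp} is exactly equivalent to containing a closed uncountable metrizable subspace. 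Either route exhibits the closed uncountable metrizable subspace required by the corollary.

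With $K=L=\mathbb{K}$ finite-dimensional and $\mathbb{K}$ containing a closed uncountable metrizable subspace, Corollary~\ref{metr_podprz} applies verbatim and yields that $C_p(\mathbb{K})$ and $C_w(\mathbb{K})$ are not homeomorphic. The main (and only genuine) obstacle is verifying the two hypotheses on $\mathbb{K}$: that it is finite-dimensional and that it contains a closed uncountable metrizable subspace. The finite-dimensionality is immediate once one notes the clopen interval base, but the metrizable subspace claim deserves a careful, explicit construction since $\mathbb{K}$ itself is non-metrizable (it is separable but not second countable). I would therefore devote the bulk of the write-up to exhibiting such a subspace explicitly, for instance by showing that for an uncountable $S\subseteq(0,1)$ the set $\{(s,1):s\in S\}$, taken with its relative topology, is closed, uncountable, and homeomorphic to $S$ with its usual metric topology, after which the conclusion follows by a one-line appeal to Corollary~\ref{metr_podprz}.
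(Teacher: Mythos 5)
Your argument breaks down at its central step: the double arrow space contains \emph{no} uncountable metrizable subspace at all (equivalently, no topological copy of the Cantor set), so Corollary~\ref{metr_podprz} cannot be applied with $L=\mathbb{K}$. The paper itself points this out immediately after its proof of Proposition~\ref{arrow}: the double arrow space is not scattered, \emph{but any metrizable subspace of $\mathbb{K}$ is countable}, which is precisely why Corollary~\ref{metr_podprz} is unusable here. Concretely, your proposed witness fails: for $S\subseteq(0,1)$, the set $\{(s,1):s\in S\}$ inherits from $\mathbb{K}$ the Sorgenfrey (lower-limit) topology on $S$, i.e., basic neighborhoods of $(s,1)$ trace to sets of the form $[s,t)\cap S$. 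If $S$ is uncountable this space is not metrizable: it is Lindel\"of (the double arrow space is hereditarily Lindel\"of), so metrizability would force second countability, yet any base must contain, for each $s\in S$, a set $B_s$ with $s\in B_s\subseteq[s,\infty)\cap S$, whence $s=\min B_s$ and $s\mapsto B_s$ is injective, so every base has cardinality at least $|S|$. The same obstruction kills every uncountable subspace of $\mathbb{K}$, since any uncountable $M\subseteq\mathbb{K}$ has an uncountable part on one of the two levels. Your ``cleaner'' alternative route is also wrong: non-scattered means $\mathbb{K}$ maps \emph{onto} $[0,1]$; it does not imply that $\mathbb{K}$ \emph{contains} a Cantor set. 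That dichotomy (Proposition~\ref{Valdivia_dychotomia}) is established in the paper only for Valdivia compacta, and $\mathbb{K}$ is not Valdivia; indeed $\mathbb{K}$ is the standard example showing the implication fails in general.

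The paper's actual proof sidesteps this by using a factorization instead of a subspace: by \cite[Lemma 4.6]{M}, $C(\mathbb{K})$ is isomorphic to $C(\mathbb{K})\times C(M)$ for every nonempty compact metrizable $M$; hence $C_w(\mathbb{K})$ is homeomorphic to $C_w(\mathbb{K})\times C_w([0,1])$, and Theorem~\ref{faktor} applies directly with $K=L=\mathbb{K}$, $M=[0,1]$ and $E=C_w(\mathbb{K})$, using (as you correctly observed) that $\mathbb{K}$ is zero-dimensional, hence strongly countable-dimensional. This is exactly the situation the extra factor $E$ in Theorem~\ref{faktor} was designed for: spaces whose function space factors through $C_w$ of an uncountable metrizable compactum even though the space itself contains no such compactum.
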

\begin{proof}
It was proved in \cite[Lemma 4.6]{M} that for each nonempty compact metrizable space $M$, the spaces $C(\mathbb{K})$ and $C(\mathbb{K})\times C(M)$ are isomorphic.
Hence $C_w(\mathbb{K})$ is homeomorphic to $C_w(\mathbb{K})\times C_w([0,1])$ and we can apply Theorem \ref{faktor}.
\end{proof}

Recall that the double arrow space is not scattered, but any metrizable subspace of $\mathbb{K}$ is countable. Therefore in the above proof we cannot use Corollary \ref{metr_podprz} instead of Theorem \ref{faktor}.

Problems \ref{problem1} and \ref{problem2} remain open in their full generality.
But there are also concrete spaces for which our methods do not work. Perhaps the most natural particular instances of Problem \ref{problem1}
which remain open are the following questions:

\begin{question}(see \cite[Question 2]{Kr})
 Is it true that $C_p([0,1]^\omega)$ and $C_w([0,1]^\omega)$ are not homeomorphic?
\end{question}

\begin{question}
 Is it true that $C_p(\beta\omega)$ and $C_w(\beta\omega)$ are not homeomorphic?
\end{question}

\begin{question}
 Is it true that $C_p(\beta\omega\setminus\omega)$ and $C_w(\beta\omega\setminus\omega)$ are not homeomorphic?
\end{question}
Here $\beta\omega$ denotes the \v{C}ech-Stone compactification of the space of natural numbers $\omega$.

\section{Property (B), Fr\'echet-Urysohn spaces and scattered compacta}\label{property_B}

The following definition was suggested to us by T.\ Banakh.

\begin{defin}\label{def}
A space $X$ has the property (B) provided $X$ can be covered by countably many closed nowhere-dense sets $\{A_n:n\in\omega\}$ such that
for any compact set $K\subseteq X$ there exists $n\in \omega$ with $K\subseteq A_n$.
\end{defin}

A family $\mathcal{A}$ of subsets of a space $X$ such that any compact subspace of $X$ is contained in some member of $\mathcal{A}$ is sometimes called a \emph{$k$-cover}.

A large class of spaces having the property (B) is formed by all infinite-dimensional Banach spaces endowed with the weak topology.
Indeed, if $X$ is an infinite-dimensional Banach space we can simply take as
$A_n$ the $n$-ball, i.e. $A_n=nB_X=\{x\in X:\| x \|\leq n \}$. Now, if $K$ is a weakly compact subset of $X$, then $K$ is norm bounded and hence $K\subseteq A_n$,
for some $n$. Moreover, each $A_n$ is weakly closed and has empty interior (in the weak topology), since all non-empty weakly open sets in the
infinite-dimensional Banach space $X$ are
not bounded.

Actually, using the same argument one can easily obtain a more general fact.

\begin{fact}\label{B_weak_general}
Let $(X,\|\cdot\|)$ be a normed space and let $\tau$ be a linear topology on $X$, strictly weaker than the norm topology. %popr. 6
If norm closed balls in $X$ are $\tau$-closed and $\tau$-compact sets are norm bounded, then $(X,\tau)$ has the property (B).

In particular, for an infinite-dimensional Banach space $X$, both spaces $(X,w)$ and $(X^*,w^*)$ possess the property (B).
\end{fact}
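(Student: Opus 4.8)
The plan is to verify Definition \ref{def} directly with the natural candidate cover, mimicking the Banach-space argument sketched just above. First I would set $A_n = nB_X = \{x\in X : \|x\|\le n\}$ for $n\in\omega$, the norm-closed balls of integer radius. Since $X=\bigcup_{n\in\omega} A_n$ as a set, the family $\{A_n : n\in\omega\}$ certainly covers $X$. There are then three things to check: that each $A_n$ is $\tau$-closed, that each $A_n$ is $\tau$-nowhere-dense, and that $\{A_n\}$ is a $k$-cover of $(X,\tau)$, i.e. every $\tau$-compact $K\subseteq X$ lies in some $A_n$.

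The first point is immediate from the hypothesis that norm-closed balls are $\tau$-closed. For the $k$-cover property, let $K\subseteq (X,\tau)$ be $\tau$-compact; by hypothesis $\tau$-compact sets are norm bounded, so $\sup_{x\in K}\|x\| \le n$ for some $n\in\omega$, whence $K\subseteq A_n$. The interesting point is that each $A_n$ is $\tau$-nowhere-dense. Since $A_n$ is $\tau$-closed, it suffices to show it has empty $\tau$-interior, and because $\tau$ is a linear (translation-invariant) topology it is enough to show that no basic $\tau$-neighborhood of a point of $A_n$ is contained in $A_n$; equivalently, that every nonempty $\tau$-open set is norm-unbounded. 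Here I would invoke the assumption that $\tau$ is strictly weaker than the norm topology: I expect to argue that a linear topology strictly weaker than the norm topology cannot have any nonempty bounded open set, for otherwise a bounded $\tau$-neighborhood of $0$ would force $\tau$ to contain a norm ball as a $\tau$-neighborhood of $0$, and by translation and scaling this would recover the full norm topology, contradicting $\tau\subsetneq$ norm topology.

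The main obstacle, and the one step deserving care, is precisely this last equivalence: deducing from ``$\tau$ strictly weaker than norm'' that every nonempty $\tau$-open set is norm-unbounded. One must rule out that $\tau$ has a bounded neighborhood of $0$ while still being strictly coarser; the cleanest route is to show that \emph{any} linear topology possessing a norm-bounded neighborhood of $0$ must be finer than, hence equal to, the norm topology (since it is assumed weaker), contradicting strictness. Once this is established the nowhere-density of each $A_n$ follows by translation invariance of $\tau$, and Definition \ref{def} is satisfied, proving $(X,\tau)$ has property (B). For the final sentence, both instances follow by taking $\tau$ to be the weak topology on $X$ (where norm-closed balls are weakly closed by convexity and the Hahn--Banach theorem, and weakly compact sets are norm bounded by the uniform boundedness principle) and the weak* topology on $X^*$ (where closed balls are weak*-closed, indeed weak*-compact by Banach--Alaoglu, and weak*-compact sets are norm bounded), both of which are strictly weaker than their respective norm topologies precisely because $X$ is infinite-dimensional.
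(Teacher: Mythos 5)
Your proposal is correct and follows essentially the same route as the paper: the paper covers $(X,\tau)$ by the balls $A_n = nB_X$, gets the $k$-cover property from norm-boundedness of $\tau$-compact sets, $\tau$-closedness from the hypothesis, and nowhere-density from the fact that nonempty $\tau$-open sets are norm-unbounded. The only difference is that the paper leaves the last point implicit (asserting it for the weak topology and saying the general case follows by ``the same argument''), whereas you supply the missing justification — the scaling/translation argument showing a linear topology with a norm-bounded neighborhood of $0$ must be finer than the norm topology — which is exactly the right completion.
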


The next theorem relates the property (B) and Fr\'echet-Urysohn property in the setting of topological spaces.

\begin{thrm}\label{no_B_and_FU}
If $X$ is a nonempty Fr\'echet-Urysohn topological space, then $X$ does not have the property (B).
\end{thrm}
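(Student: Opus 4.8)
The plan is to argue by contradiction. Suppose $X$ is a nonempty Fréchet-Urysohn space that nevertheless has the property (B), witnessed by closed nowhere-dense sets $\{A_n:n\in\omega\}$ covering $X$ and absorbing every compact subset. The intuition is that the Fréchet-Urysohn property lets us extract convergent sequences at will, and convergent sequences (together with their limits) are compact, so they must land inside a single $A_n$. If we can build a single convergent sequence whose points are spread across infinitely many of the $A_n$ in a way that escapes every fixed $A_n$, we contradict the $k$-cover condition. So the heart of the argument is to manufacture a point of $X$ that is in the closure of a cleverly chosen set $S$, apply Fréchet-Urysohn to get a sequence from $S$ converging to it, and arrange things so that the compact set formed by this sequence and its limit cannot be contained in any $A_n$.

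First I would set up the right set $S$ to take closures of. Since each $A_n$ is nowhere dense, $X\setminus A_n$ is open and dense, so by intersecting we see $\bigcup_n A_n = X$ cannot happen if $X$ were Baire and the $A_n$ were genuinely nowhere dense — but $X$ need not be Baire, so I cannot simply invoke the Baire category theorem. Instead, the plan is to use density directly. Fix any point $x_0\in X$. Because $A_0$ is nowhere dense, every neighborhood of $x_0$ meets $X\setminus A_0$; more usefully, I would like to find, for each $n$, a point lying outside $A_0\cup\dots\cup A_n$ but approaching a target point. The clean approach is to choose a point $p\in X$ and consider the set $S=\{s\in X: s\notin A_{n}\text{ for the index }n\text{ associated to }s\}$, designed so that $p\in\overline{S}$ yet any sequence in $S$ converging to $p$ must use points from arbitrarily large $A_n$.

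Concretely, here is the construction I expect to carry out. I would like to produce a point $p$ and a sequence $(x_k)$ converging to $p$ such that for every $n$ there are infinitely many $k$ with $x_k\notin A_n$. Then the compact set $K=\{x_k:k\in\omega\}\cup\{p\}$ satisfies $K\not\subseteq A_n$ for every $n$ (since $A_n$ misses infinitely many $x_k$), contradicting property (B). To get such a $p$, I would exploit that each $A_n$ has empty interior: starting from any point and any neighborhood, I can always step outside $\bigcup_{i\le n}A_i$ because a finite union of nowhere-dense sets is nowhere dense, hence has dense complement. The subtle point is to organize these choices so that they accumulate to a genuine limit point $p$ and so that $p\in\overline{S}$ where $S$ is the set of chosen points; then Fréchet-Urysohn gives a sequence from $S$ converging to $p$, and I must check this extracted sequence still escapes every $A_n$ infinitely often.

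The main obstacle I anticipate is precisely this last coordination: ensuring that the sequence delivered by the Fréchet-Urysohn property — which I do not get to choose, only its existence from $\overline{S}$ — meets infinitely many complements $X\setminus A_n$, rather than eventually being swallowed by a single $A_N$. The cleanest way around this is to arrange the witnessing set $S$ so that \emph{every} infinite subset of $S$, and in particular every sequence drawn from $S$, is forced to leave each $A_n$ infinitely often; for instance by choosing $S=\{s_n:n\in\omega\}$ with $s_n\notin A_0\cup\dots\cup A_n$, so that $S\cap A_n$ is finite for each $n$. Then any sequence in $S$ converging to a point $p\in\overline{S}\setminus S$ automatically has $K=\{s_n\}\cup\{p\}$ compact with $K\cap(X\setminus A_n)$ infinite for all $n$, giving the contradiction. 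The remaining care is to guarantee $\overline{S}\neq S$ (so that there is a genuine limit point to converge to), which follows because $S$ is infinite and $X$, being covered by the compact-absorbing family, cannot contain a closed discrete copy of $S$ escaping all the $A_n$ — otherwise that discrete set would itself be an obstruction; making this dichotomy airtight is the step deserving the most attention.
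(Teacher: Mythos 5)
Your reduction of the problem to producing a single convergent sequence that escapes every $A_n$ infinitely often is the right target, and your observation that any sequence from $S=\{s_n:n\in\omega\}$ (with $s_n\notin A_0\cup\dots\cup A_n$) converging to a point outside $S$ would finish the proof is correct. The genuine gap is the very last step: nothing guarantees that $S$ has an accumulation point, and the justification you sketch for it is false. An infinite closed discrete set is \emph{not} compact, so property (B) places no constraint on it whatsoever: ``escaping all the $A_n$'' does not make a closed discrete set an obstruction to the $k$-cover condition. Concretely, $(\ell_2,w)$ has property (B), witnessed by $A_n=nB_{\ell_2}$, and the set $\{n^2e_n:n\in\NN\}$ is weakly closed, weakly discrete, and meets each $A_n$ in a finite set; so your claimed dichotomy fails in general, and under your hypotheses it could only be rescued by the very theorem you are trying to prove --- the argument is circular. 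Since the $s_n$ in your construction are chosen arbitrarily, they may well form exactly such a closed discrete set, in which case the Fr\'echet--Urysohn property gives you nothing (every point of $\overline{S}=S$ is a limit only of eventually constant sequences). Note also that the obvious repair --- fix a point $p$ in advance and, using density of the open set $X\setminus(A_0\cup\dots\cup A_n)$ together with the Fr\'echet--Urysohn property, choose for each $n$ a sequence of points outside $A_0\cup\dots\cup A_n$ converging to $p$, then diagonalize --- does not work either: extracting one convergent sequence meeting infinitely many of countably many given sequences converging to $p$ is an $\alpha_4$-type diagonalization that Fr\'echet--Urysohn spaces need not admit (the countable sequential fan is Fr\'echet--Urysohn but not $\alpha_4$).

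The paper closes precisely this gap by a two-level construction in which the escaping points are anchored to a convergent structure with \emph{distinct} limits. First, property (B) forces $X$ to have no isolated points, so the Fr\'echet--Urysohn property yields distinct points $x_n\to x$ with $x_n\neq x$. Since each $X\setminus A_n$ is dense, the Fr\'echet--Urysohn property yields, for each $n$, distinct points $x^k_n\in X\setminus A_n$ converging to $x_n$. Letting $S$ be the set of all the $x^k_n$ (with the limits $x_n$ and $x$ removed), one gets $x\in\overline{S}$, hence a sequence $(y_i)$ in $S$ converging to $x$. The compact set $K=\{y_i:i\in\omega\}\cup\{x\}$ can contain only finitely many points of each ``column'' $(x^k_n)_k$, because those points cluster only at $x_n\neq x$; hence $K$ contains points from infinitely many columns, so $K\nsubseteq A_n$ for infinitely many $n$, and therefore for all $n$ once the $A_n$ are taken increasing. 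This anchoring of the escaping points to the distinct limits $x_n\neq x$ --- rather than to a single point or to nothing at all --- is the idea your proposal is missing, and it is what makes the Fr\'echet--Urysohn hypothesis do real work.
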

\begin{proof}
Assume towards a contradiction that $X$ is a nonempty Fr\'echet-Urysohn space with the property (B). Let $\{A_n:n\in\omega\}$ be a sequence of subsets of
$X$ witnessing the property (B), we can additionally assume that this sequence is increasing. 

Property (B) implies that $X$ has no isolated points. Fix a point $x\in X$; by Fr\'echet-Urysohn property we can find a sequence $(x_n)_{n\in\omega}$
of distinct points of $X$ converging to $x$. We can also assume that every $x_n$ is distinct from $x$.
For every $n$, since $A_n$ is nowhere-dense, we can find a sequence $(x^k_n)_{k\in\omega}$ of distinct points from $X\setminus A_n$ converging to $x_n$. Let
$$S=\{x^k_n: k,n\in\omega\}\setminus(\{x_n: n\in\omega\}\cup\{x\}).$$
Observe that, for every $n$, the sequences $(x_n)_{n\in\omega}$ and $(x^k_n)_{k\in\omega}$ have different limits, therefore $S$ contains all but finitely many
elements of $(x^k_n)_{k\in\omega}$. It follows that all points $x_n$ are in the closure of $S$, and consequently this closure contains also the point $x$.
Since $X$ is Fr\'echet-Urysohn, we can find a sequence $(y_i)_{i\in\omega}$ of points of $S$ converging to $x$.
Consider the compact set $K= \{y_i: i\in\omega\}\cup\{x\}$. For every $n$, the set $K$ can contains only finitely many elements of the sequence
$(x^k_n)_{k\in\omega}$, therefore it must contain elements from infinitely many such sequences. If $x^k_n\in K$, then $K$ is not contained in $A_n$,
so $K$ is not contained in infinitely many $A_n$. Since the sequence $(A_n)_{n\in\omega}$ is increasing, it follows that no $A_n$ contains $K$, a contradiction.
\end{proof}

From the last two results it follows that, for any infinite compact space $K$, the space $C_w(K)$ has the property (B) and is not Fr\'echet-Urysohn
(see \cite{SW} for a more general result). A function space equipped with the pointwise topology may not
have the property (B) (cf. Theorem \ref{B_charakteryzacja} below). However
we have the following.
\begin{fact}\label{B_przelicz}
Let $K$ be a compact space with a countable family $\mathcal{S}$ of infinite subsets, such that any nonempty open subset of $K$ contains a member of $\mathcal{S}$.
Then $C_p(K)$ has the property (B). 
\end{fact}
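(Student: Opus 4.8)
The plan is to exhibit an explicit countable $k$-cover of $C_p(K)$ by closed nowhere-dense sets. Enumerate $\mathcal{S}=\{S_j:j\in\omega\}$ and, for $j\in\omega$ and $m\in\NN$, set
$$A_{j,m}=\{f\in C_p(K):|f(x)|\le m\ \text{for all}\ x\in S_j\}.$$
First I would verify the two structural properties of these sets. Each $A_{j,m}$ is closed, because its complement $\{f:\exists x\in S_j,\ |f(x)|>m\}=\bigcup_{x\in S_j}\{f:|f(x)|>m\}$ is a union of subbasic open sets of $C_p(K)$. Each $A_{j,m}$ is nowhere-dense, because its complement is dense: given a basic neighbourhood determined by constraints at a finite set $F\subseteq K$, the set $S_j\setminus F$ is still infinite, so picking any $z\in S_j\setminus F$ and using the Tietze extension theorem one produces a continuous $g$ satisfying the constraints with $g(z)=m+1$, whence $g\notin A_{j,m}$. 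Finally the family covers $C_p(K)$, since every $f\in C(K)$ is bounded and hence lies in $A_{0,m}$ for any $m\ge\lVert f\rVert$.

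The substantial point is the $k$-cover property: every set $A$ that is compact in $C_p(K)$ must be contained in some $A_{j,m}$. I would argue this through the upper envelope $M(x)=\sup_{f\in A}|f(x)|$. Since evaluation at $x$ is continuous on $C_p(K)$, the image $\{f(x):f\in A\}$ is compact in $\RR$, so $M(x)<\infty$ for every $x$; being a supremum of continuous functions, $M$ is lower semicontinuous, so each set $\{x:M(x)>m\}$ is open.

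The key step, and the main obstacle, is to locate a level set of $M$ with nonempty interior. Here I would invoke the Baire category theorem: $K$ is compact Hausdorff, hence a Baire space. If every open set $\{x:M(x)>m\}$ were dense, their intersection would be dense and in particular nonempty; but that intersection equals $\{x:M(x)=\infty\}$, which is empty because $M$ is finite-valued. Consequently some $\{x:M(x)>m\}$ fails to be dense, and its complement $\{x:M(x)\le m\}$ has nonempty interior $U$. Now the hypothesis on $\mathcal{S}$ enters decisively: being a nonempty open set, $U$ contains some $S_j\in\mathcal{S}$. Then $|f(x)|\le M(x)\le m$ for every $f\in A$ and every $x\in S_j$, that is $A\subseteq A_{j,m}$, completing the verification of property (B).
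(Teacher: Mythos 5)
Your proof is correct and takes essentially the same approach as the paper: you use the same witnessing sets $A_{j,m}=\{f:|f(S_j)|\subseteq[-m,m]\}$, and your argument via the lower semicontinuous envelope $M(x)=\sup_{f\in A}|f(x)|$ and a non-dense open set $\{x:M(x)>m\}$ is just a rephrasing of the paper's Baire category argument applied to the countable cover of $K$ by the closed sets $\{x:(\forall f\in A)\,f(x)\in[-m,m]\}$.
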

\begin{proof}
For $n\in\NN$ and $S\in \mathcal{S}$ put $$A_{n,S}=\{f\in C_p(K):f(S)\subseteq [-n,n]\}.$$
We will show that $\{A_{n,S}:n\in\NN,S\in\mathcal{S}\}$ is a countable collection of closed nowhere-dense sets witnessing the property (B) for $C_p(K)$.

Obviously, for each $n\in\NN$ and $S\in \mathcal{S}$, the set $A_{n,S}$ is closed in $C_p(K)$. It also has empty interior in $C_p(K)$ because each $S$ is infinite.
Now, take an arbitrary
compact set $A\subseteq C_p(K)$. The set $A$ is pointwise bounded being compact in the pointwise topology. Thus, we have
$$K=\bigcup_{n\in\NN}\{x\in K: (\forall f\in A)\;f(x)\in[-n,n]\}.$$
Observe that the above union consists of closed sets.
By the Baire category theorem, there is $n_0\in \NN$ such that the set $$K_0=\{x\in K: (\forall f\in A)\;f(x)\in[-n_0,n_0]\}$$ has non-empty interior in $K$.
It follows that there is $S\in\mathcal{S}$ with $S\subseteq K_0$ and hence $A\subseteq A_{n_0,S}$.
\end{proof}

Recall that a family $\mathcal{B}$ consisting of nonempty open subsets of a topological space $X$ is a \textit{$\pi$-base} if for any
nonempty open set $U\subseteq X$, there
is $B\in\mathcal{B}$ such that $B\subseteq U$.

\begin{col}\label{B_pi_baza}
If $K$ is a dense-in-itself compact space with a countable $\pi$-base, then $C_p(K)$ has the property (B).
\end{col}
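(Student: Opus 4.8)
The plan is to deduce this corollary directly from Proposition \ref{B_przelicz} by exhibiting the required countable family $\mathcal{S}$ of infinite subsets. The natural candidate is the countable $\pi$-base itself: if $\mathcal{B}$ is a countable $\pi$-base for $K$, I would simply set $\mathcal{S}=\mathcal{B}$ and check that it satisfies the hypotheses of Proposition \ref{B_przelicz}.

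First I would verify that every member of $\mathcal{B}$ is infinite. This is the only place where the dense-in-itself hypothesis is used. Indeed, each $B\in\mathcal{B}$ is a nonempty open set, and in a Tychonoff (hence $T_1$) space a nonempty \emph{finite} open set would necessarily contain an isolated point: if $B$ were finite and $x\in B$, then $B\setminus\{x\}$ is closed, so $\{x\}=B\cap(K\setminus(B\setminus\{x\}))$ would be open, contradicting that $K$ is dense-in-itself. Hence every $B\in\mathcal{B}$ is infinite, so $\mathcal{S}=\mathcal{B}$ is a countable family of infinite subsets of $K$.

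Next, the covering condition required by Proposition \ref{B_przelicz} — that every nonempty open subset of $K$ contains a member of $\mathcal{S}$ — is literally the defining property of a $\pi$-base applied to $\mathcal{S}=\mathcal{B}$. Thus all the hypotheses of Proposition \ref{B_przelicz} are met, and the conclusion that $C_p(K)$ has the property (B) follows at once.

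I expect no substantial obstacle here; the content of the corollary is entirely carried by Proposition \ref{B_przelicz}, and the only genuinely new observation is that the dense-in-itself assumption forces the basic open sets to be infinite, which is precisely what converts a $\pi$-base into an admissible family $\mathcal{S}$.
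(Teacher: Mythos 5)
Your proof is correct and is exactly the argument the paper intends: the corollary is stated as an immediate consequence of Proposition \ref{B_przelicz}, obtained by taking $\mathcal{S}$ to be the countable $\pi$-base, with the dense-in-itself hypothesis guaranteeing (via the $T_1$ property) that each basic open set is infinite. Your explicit verification of that last point is the only content, and you have it right.
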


A surjective map $f:X\to Y$ between topological spaces is said to be \textit{irreducible} if no proper closed subset of $X$ maps onto $Y$.
If $X$ is compact, by Kuratowski-Zorn Lemma,
for any surjective map $f:X\to Y$ there is a closed subset $C\subseteq X$ such that the restriction $f\upharpoonright C$ is irreducible.

\begin{fact}\label{B_nieprzywiedlne}
Every non-scattered compact space contains a dense-in-itself compact subspace with a countable $\pi$-base.
\end{fact}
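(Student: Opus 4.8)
The plan is to reduce everything to the unit interval by means of an irreducible map. Since $K$ is a non-scattered compact space, the characterization recalled in Section~2 gives a continuous surjection $\varphi : K \to [0,1]$. As noted immediately above the statement, the compactness of $K$ together with the Kuratowski--Zorn Lemma produces a closed subset $C \subseteq K$ for which the restriction $f = \varphi\upharpoonright C : C \to [0,1]$ is irreducible; being closed in $K$, the space $C$ is compact. I claim that $C$ itself is the desired subspace, and everything will follow from one covering property of $f$.

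First I would extract that covering property: for every nonempty open $U \subseteq C$ there is a nonempty open $V \subseteq [0,1]$ with $f^{-1}(V) \subseteq U$. Indeed, $C \setminus U$ is a proper closed subset of $C$, so irreducibility gives $f(C\setminus U) \neq [0,1]$; since $C$ is compact and $f$ continuous, $f(C\setminus U)$ is closed in $[0,1]$, so $V = [0,1]\setminus f(C\setminus U)$ is nonempty and open, and $f^{-1}(V) = C \setminus f^{-1}(f(C\setminus U)) \subseteq C \setminus (C\setminus U) = U$, using $C\setminus U \subseteq f^{-1}(f(C\setminus U))$.

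Next I would build the $\pi$-base from a countable base. Fix a countable base $\{V_n : n\in\omega\}$ of the second countable space $[0,1]$, with each $V_n$ nonempty. Given an arbitrary nonempty open $U \subseteq C$, take $V$ as above and choose $n$ with $\emptyset \neq V_n \subseteq V$; then $f^{-1}(V_n) \subseteq f^{-1}(V) \subseteq U$, and $f^{-1}(V_n)$ is a nonempty (as $f$ is onto) open subset of $C$. Hence $\{f^{-1}(V_n): n\in\omega\}$ is a countable $\pi$-base for $C$. Finally I would check that $C$ is dense-in-itself: if some $c \in C$ were isolated, then $\{c\}$ is open in $C$, and applying the covering property with $U=\{c\}$ yields a nonempty open $V \subseteq [0,1]$ with $f^{-1}(V) \subseteq \{c\}$; surjectivity of $f$ then forces $V \subseteq \{f(c)\}$, contradicting the fact that a nonempty open subset of $[0,1]$ is infinite. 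Thus $C$ has no isolated points.

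The step that carries the whole argument, and where I expect the only real subtlety, is the covering property of irreducible maps — the observation that an irreducible map almost inverts nonempty open sets. Once this is isolated, both the construction of the countable $\pi$-base and the absence of isolated points in $C$ drop out of it immediately, and the role of choosing $[0,1]$ as the target is simply to have a non-scattered second countable model to pull back from.
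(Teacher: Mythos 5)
Your proof is correct and takes essentially the same route as the paper: a continuous surjection $\varphi:K\to[0,1]$ obtained from non-scatteredness, an irreducible restriction $\varphi\upharpoonright C$ to a closed subset $C$ via Kuratowski--Zorn, and then transferring the countable $\pi$-base and the dense-in-itself property of $[0,1]$ back to $C$. The only difference is that the paper cites standard facts for these two transfer steps (e.g.\ \cite[S.228, Fact 1]{Tkachuk}), whereas you derive both directly from the covering property of irreducible closed maps, which is precisely the content of those cited facts.
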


\begin{proof}
Let $K$ be a non-scattered compact space. Fix a continuous surjection $\varphi:K\to [0,1]$. There is a closed subset $C\subseteq K$ such that the
mapping $\varphi\upharpoonright C:C\to [0,1]$ is irreducible.
We claim that $C\subseteq K$ is a compact subspace we are looking for. Indeed,
since $[0,1]$ is dense-in-itself and $\varphi\upharpoonright C$ is irreducible, it follows that
$C$ is dense-in-itself as well. Similarly, irreducibility of a closed map $\varphi\upharpoonright C$ and the existence of countable $\pi$-base in $[0,1]$
imply the existence
of a countable $\pi$-base in $C$ (see \cite[S.228, Fact 1]{Tkachuk}). 
\end{proof}

\begin{fact}\label{B_podprz}
If a Tychonoff space $X$ contains a compact subspace $K$ such that $C_p(K)$ has the property (B), then $C_p(X)$ also has the property (B). 
\end{fact}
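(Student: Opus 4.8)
The plan is to transport the witnessing family for $C_p(K)$ to $C_p(X)$ along the restriction map. Let $R\colon C_p(X)\to C_p(K)$ be given by $R(f)=f\upharpoonright K$. Since for each $x\in K$ the evaluation $f\mapsto f(x)$ is continuous on both spaces, and such evaluations generate the pointwise topology on $C_p(K)$, the map $R$ is continuous. Let $\{A_n:n\in\omega\}$ be a family of closed nowhere-dense subsets of $C_p(K)$ witnessing the property (B), and put $B_n=R^{-1}(A_n)$. Each $B_n$ is closed by continuity of $R$, and since $\bigcup_n A_n=C_p(K)$ we get $\bigcup_n B_n=R^{-1}(C_p(K))=C_p(X)$. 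The $k$-cover property is immediate: if $C\subseteq C_p(X)$ is compact then $R(C)$ is compact in $C_p(K)$, hence $R(C)\subseteq A_n$ for some $n$, and therefore $C\subseteq R^{-1}(A_n)=B_n$.

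It remains to check that each $B_n$ is nowhere-dense, and this is the main point. I will argue by contradiction: suppose the closed set $B_n$ contains a nonempty basic open set $$U=\{f\in C_p(X):|f(x_i)-f_0(x_i)|<\varepsilon,\ i\le p\}$$ for some $f_0\in C_p(X)$, finitely many points $x_1,\dots,x_p\in X$ and $\varepsilon>0$. Split the finite set $F=\{x_1,\dots,x_p\}$ into the part $F\cap K$ lying in $K$ and the part $F'=F\setminus K$ lying off $K$, and consider the basic open neighbourhood $W=\{g\in C_p(K):|g(x)-f_0(x)|<\varepsilon\ \text{for }x\in F\cap K\}$ of $R(f_0)$ in $C_p(K)$. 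I claim $W\subseteq A_n$, which contradicts the nowhere-density of $A_n$ since $W$ is a nonempty open subset of $C_p(K)$.

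To prove the claim I must show that every $g\in W$ lifts to some $f\in U$ with $R(f)=g$; then $g=R(f)\in R(B_n)\subseteq A_n$. Here I use that a compact subspace of a Tychonoff space is $C$-embedded: embed $X$ into a Tychonoff cube, note that $K$ is closed there, apply the Tietze extension theorem, and restrict back to $X$. Since each point of $F'$ lies outside the closed set $K$, it is isolated in the compact set $K\cup F'$, so the function equal to $g$ on $K$ and to $f_0$ on $F'$ is continuous on $K\cup F'$ and hence extends to some $f\in C(X)$. By construction $f\upharpoonright K=g$ and $|f(x_i)-f_0(x_i)|<\varepsilon$ for every $i$ (the difference is controlled by the definition of $W$ on $F\cap K$ and is $0$ on $F'$), so $f\in U\subseteq B_n$ and $g=R(f)\in A_n$, as required. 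The one delicate step is precisely this lifting, i.e.\ the simultaneous control of the values on $K$ and at the finitely many extra points off $K$; everything else is a routine transport of the family $\{A_n\}$ through the continuous map $R$.
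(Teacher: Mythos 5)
Your proof is correct and follows essentially the same route as the paper: the paper pulls the witnessing family $\{A_n\}$ back along the restriction map $p(f)=f\upharpoonright K$, citing that compactness of $K$ makes $p$ open and onto, which immediately yields that the preimages are closed, nowhere dense, and form a $k$-cover. Your lifting argument (gluing $g$ on $K$ with $f_0$ at the finitely many points off $K$ and extending, using that compact subspaces of Tychonoff spaces are $C$-embedded) is precisely a hands-on proof of that openness-and-surjectivity fact, so the two proofs coincide in substance.
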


\begin{proof}
Consider the restriction map $p:C_p(X)\to C_p(K)$ defined by $p(f)=f\upharpoonright K$. Note that compactness of $K$ imply that $p$ is open and onto.
Let $\{A_n:n\in\omega\}$ be a family of closed nowhere-dense subsets of $C_p(K)$ witnessing the property (B) for $C_p(K)$. Since $p$ is open, for each $n\in\omega$,
the set $p^{-1}(A_n)$ is a (closed) nowhere-dense subset of $C_p(X)$. Take an arbitrary compact set $A\subseteq C_p(X)$. By continuity of $p$, the set $p(A)$ is
compact, so $p(A)\subseteq A_n$, for some $n\in\omega$. This gives $A\subseteq p^{-1}(A_n)$.
\end{proof}

Corollary \ref{B_pi_baza}, Propositions \ref{B_nieprzywiedlne} and \ref{B_podprz} immediately imply the following

\begin{col}\label{B_nieroprosz}
For any  non-scattered compact space $K$, the space $C_p(K)$ has the property (B).
\end{col}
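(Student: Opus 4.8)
The plan is to chain together the three preceding results, since each supplies exactly one link of the argument. Let $K$ be a non-scattered compact space; the goal is to produce a countable family of closed nowhere-dense subsets of $C_p(K)$ forming a $k$-cover, i.e.\ witnessing the property (B) in the sense of Definition \ref{def}.

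First I would invoke Proposition \ref{B_nieprzywiedlne} to extract from $K$ a compact subspace $L\subseteq K$ that is dense-in-itself and carries a countable $\pi$-base. This is the step that consumes the non-scatteredness of $K$: it yields a continuous surjection onto $[0,1]$, and passing to an irreducible restriction transports both the dense-in-itself property and the countability of a $\pi$-base from $[0,1]$ down to $L$.

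Next I would apply Corollary \ref{B_pi_baza} to this subspace. Since $L$ is a dense-in-itself compact space with a countable $\pi$-base, the corollary gives that $C_p(L)$ has the property (B). Under the hood this is Proposition \ref{B_przelicz}: a countable $\pi$-base furnishes a countable family $\mathcal{S}$ of infinite sets meeting every nonempty open subset of $L$, and then the sets $A_{n,S}$ witness property (B) via a Baire-category argument.

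Finally I would lift property (B) from $C_p(L)$ up to $C_p(K)$ by Proposition \ref{B_podprz}, taking the ambient Tychonoff space in that proposition to be our $K$ and its compact subspace to be $L$. Since $K$ is compact, $L$ is a compact subspace of $K$, so the hypothesis is met and the proposition delivers property (B) for $C_p(K)$, as required. The proof is a direct composition, so no genuine obstacle remains at this stage; all the real work—the irreducible-map extraction in Proposition \ref{B_nieprzywiedlne} and the $\pi$-base/Baire construction behind Corollary \ref{B_pi_baza}—has already been carried out in the cited statements.
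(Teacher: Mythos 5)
Your proof is correct and is precisely the paper's argument: the paper states this corollary as an immediate consequence of Proposition \ref{B_nieprzywiedlne}, Corollary \ref{B_pi_baza}, and Proposition \ref{B_podprz}, chained together exactly as you do.
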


\begin{thrm}\label{B_charakteryzacja}
Let $K$ be a compact space. The following conditions are equivalent
\begin{enumerate}
 \item[(a)] $K$ is scattered,
 \item[(b)] $C_p(K)$ is Fr\'echet-Urysohn,
 \item[(c)] $C_p(K)$ does not have the property (B).
\end{enumerate}
\end{thrm}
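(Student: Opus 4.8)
The plan is to prove the theorem by closing the cyclic chain of implications $(a)\Rightarrow(b)\Rightarrow(c)\Rightarrow(a)$, which simultaneously yields the equivalence of all three conditions. The point is that each individual arrow is either a known fact or an immediate consequence of a result already established above, so the proof reduces to assembling these pieces in the right order.

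For the arrow $(b)\Rightarrow(c)$ I would note that $C_p(K)$ is always nonempty (it contains the constant functions), so if it is Fr\'echet-Urysohn then Theorem \ref{no_B_and_FU} applies verbatim and tells us that $C_p(K)$ cannot have the property (B); this is a one-line deduction. For the arrow $(c)\Rightarrow(a)$ I would argue by contraposition: if $K$ is \emph{not} scattered, then Corollary \ref{B_nieroprosz} states precisely that $C_p(K)$ \emph{does} have the property (B), contradicting $(c)$. Hence $(c)$ forces $K$ to be scattered. Thus both of these implications are immediate from the material developed earlier in this section and require no new work.

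The remaining arrow $(a)\Rightarrow(b)$ is the classical fact that $C_p(K)$ is Fr\'echet-Urysohn whenever $K$ is a scattered compact space; as already recalled in the introduction this is well-known, and I would invoke it directly with an appropriate citation from $C_p$-theory. The only genuine input beyond the propositions proved above therefore sits in this single implication, and it is the one place where, if a self-contained treatment were preferred to a reference, one would have to do real work — namely to exploit the Cantor--Bendixson structure of a scattered compact $K$ in order to extract, from any $f$ in the closure of a set $S\subseteq C_p(K)$, an actual sequence in $S$ converging pointwise to $f$. Everything else in the proof is bookkeeping: the cycle $(a)\Rightarrow(b)\Rightarrow(c)\Rightarrow(a)$ delivers all six implications among the three conditions at once, completing the argument.
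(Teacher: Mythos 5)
Your proposal is correct and matches the paper's proof in all essentials: the paper also deduces $(b)\Rightarrow(c)$ from Theorem \ref{no_B_and_FU}, gets $(c)\Rightarrow(a)$ from Corollary \ref{B_nieroprosz}, and handles the scattered--Fr\'echet-Urysohn connection by citing the Gerlits--Pytkeev theorem (the paper invokes the full equivalence $(a)\Leftrightarrow(b)$, while you use only the direction $(a)\Rightarrow(b)$, a negligible difference). No gaps; the citation you defer to is exactly the one the paper uses.
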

\begin{proof}
Equivalence $(a)\Leftrightarrow (b)$ is due to Gerlits and Pytkeev (cf. \cite[III.1.2]{Arha}). Theorem \ref{no_B_and_FU} provides implication $(b)\Rightarrow (c)$.
From Corollary \ref{B_nieroprosz} it follows that $(c)$ implies $(a)$.
\end{proof}

Let us remark that for Tychonoff spaces $X$, the Fr\'echet-Urysohn property of $C_p(X)$ is not equivalent to the failure of the property (B).
To see this consider $\omega_1$ with the discrete
topology. Then $C_p(\omega_1)=\RR^{\omega_1}$ does not have the property (B) being a Baire space and is not Fr\'echet-Urysohn. This motivates the following problem.
\begin{prob}
Characterize the property (B) of $C_p(X)$ in terms of the topology of a Tychonoff space $X$.
\end{prob}

\begin{col}\label{wniosek1}
Let $K$ be an infinite compact space and let $S$ be an infinite scattered compact space. The spaces $C_w(K)$ and $C_p(S)$ are not homeomorphic. %popr. 7
\end{col}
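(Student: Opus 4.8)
The plan is to exhibit a topological property that \emph{distinguishes} $C_w(K)$ from $C_p(S)$; since any such property is preserved by homeomorphisms, this immediately yields the conclusion. The natural candidate, already singled out in item (D) of Section 3, is the Fréchet--Urysohn property. I would therefore aim to show that $C_p(S)$ is Fréchet--Urysohn while $C_w(K)$ is not, and then invoke the invariance of this property under homeomorphisms.

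First I would handle $C_p(S)$. Since $S$ is a scattered compact space, the equivalence $(a)\Leftrightarrow(b)$ of Theorem \ref{B_charakteryzacja} applies directly and tells us that $C_p(S)$ is Fréchet--Urysohn. (This step uses the hypothesis on $S$ but not its infiniteness.)

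Next I would treat $C_w(K)$. Because $K$ is an infinite compact space, $C(K)$ is an infinite-dimensional Banach space, and $C_w(K)$ is exactly $C(K)$ equipped with its weak topology. By Proposition \ref{B_weak_general} (applied to the infinite-dimensional Banach space $C(K)$, whose weak topology is strictly weaker than the norm topology, whose norm balls are weakly closed, and whose weakly compact sets are norm bounded), the space $C_w(K)$ has the property (B). Theorem \ref{no_B_and_FU} then forbids a nonempty Fréchet--Urysohn space from having the property (B), so $C_w(K)$ is \emph{not} Fréchet--Urysohn. Equivalently, one could phrase the whole argument in terms of the property (B) itself, using $(a)\Leftrightarrow(c)$ of Theorem \ref{B_charakteryzacja} to note that $C_p(S)$ fails (B) while $C_w(K)$ has (B).

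Finally I would combine the two observations: a homeomorphism $C_w(K)\to C_p(S)$ would transport the Fréchet--Urysohn property from the target to the source (Fréchet--Urysohnness being a purely topological invariant), but $C_p(S)$ is Fréchet--Urysohn and $C_w(K)$ is not, a contradiction. There is no serious obstacle here: the result is essentially an assembly of Theorems \ref{no_B_and_FU} and \ref{B_charakteryzacja} together with Proposition \ref{B_weak_general}, and the only points requiring a word of justification are that $C(K)$ is infinite-dimensional (which follows from $K$ being infinite) and that the hypotheses of Proposition \ref{B_weak_general} are met for the weak topology on $C(K)$.
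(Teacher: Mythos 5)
Your proof is correct and is essentially the paper's own argument: the paper distinguishes the two spaces by property (B) --- $C_p(S)$ fails it by Theorem \ref{B_charakteryzacja}, while $C_w(K)$ has it by Proposition \ref{B_weak_general} --- which is exactly the reformulation you mention at the end, built from the same three results (Theorem \ref{B_charakteryzacja}, Theorem \ref{no_B_and_FU}, Proposition \ref{B_weak_general}). Whether one names Fr\'echet--Urysohnness or property (B) as the distinguishing invariant is immaterial, since Theorem \ref{no_B_and_FU} ties them together; both versions are complete and correct.
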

\begin{proof}
%A compact space $S$ is a scattered if and only if $C_p(S)$ is Fr\'echet-Urysohn (see \cite{Tkachuk}).
By Theorem \ref{B_charakteryzacja}, the space $C_p(S)$ does not have the property (B). On the other hand, as we already observed $C_w(K)$ has the property (B)
(see remarks following Theorem \ref{no_B_and_FU}).
%Hence, by Theorem \ref{tw1} $C_p(S)$ does not have property (B). On the other hand, as we already observed $C_w(K)$ has property (B)
%(see remarks following Definition \ref{def}).
\end{proof}

We can also prove the following analogous result going in the ``opposite direction''.
\begin{thrm}\label{thm-scattered-weak}
Let $K$ be an infinite compact space and let $S$ be an infinite scattered compact space. The spaces $C_p(K)$ and $C_w(S)$ are not homeomorphic. %popr. 7
\end{thrm}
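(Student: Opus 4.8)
The plan is to distinguish $C_p(K)$ from $C_w(S)$ by combining the Fréchet–Urysohn property with a refinement of property (B), splitting the argument according to whether $K$ is scattered. Assume, towards a contradiction, that $C_p(K)$ and $C_w(S)$ are homeomorphic. If $K$ is scattered, then by Theorem \ref{B_charakteryzacja} the space $C_p(K)$ is Fréchet–Urysohn, whereas $C_w(S)$ is not Fréchet–Urysohn for the infinite compactum $S$ (see the remarks following Theorem \ref{no_B_and_FU}); this is already a contradiction. Thus the whole difficulty lies in the case when $K$ is non-scattered, which I would treat by isolating a topological property enjoyed by $C_w(S)$ but not by $C_p(K)$.

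The property I would use, call it $(\ast)$, is: \emph{$X$ is the union of a countable family $\{F_n:n\in\omega\}$ of closed Fréchet–Urysohn subspaces which is a $k$-cover, i.e.\ every compact subset of $X$ is contained in some $F_n$.} First I would check that $C_w(S)$ has $(\ast)$, and this is where scatteredness of $S$ enters. Put $F_n=nB=\{f\in C(S):\|f\|\le n\}$. Each $nB$ is norm-closed and convex, hence weakly closed, and since weakly compact sets are norm bounded, $\{nB:n\in\omega\}$ is a $k$-cover of $C_w(S)$. Because $S$ is scattered, the weak and the pointwise topologies agree on norm-bounded subsets of $C(S)$, so $nB$ carries the topology it inherits from $C_p(S)$; as $S$ is scattered, $C_p(S)$ is Fréchet–Urysohn by Theorem \ref{B_charakteryzacja}, hence so is its subspace $nB$. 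Therefore $C_w(S)$ satisfies $(\ast)$.

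Next I would note that $(\ast)$ passes to closed subspaces: if $\{F_n\}$ witnesses $(\ast)$ for $X$ and $Y\subseteq X$ is closed, then $\{F_n\cap Y\}$ witnesses $(\ast)$ for $Y$, since each $F_n\cap Y$ is closed in the Fréchet–Urysohn space $F_n$ (hence Fréchet–Urysohn) and the $k$-cover property is inherited. Since $K$ is non-scattered there is a continuous surjection $\varphi\colon K\to[0,1]$, and the dual map $\varphi^\ast\colon C_p([0,1])\to C_p(K)$, $g\mapsto g\circ\varphi$, is a linear homeomorphism onto the closed subspace of those $f\in C_p(K)$ that are constant on the fibres of $\varphi$. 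Consequently, if $C_p(K)$ had $(\ast)$ then so would $C_p([0,1])$. In this way the whole proof reduces to the single assertion that $C_p([0,1])$ does \emph{not} have property $(\ast)$.

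This last assertion is the heart of the matter and the step I expect to be the main obstacle. My plan for it is as follows. Suppose $C_p([0,1])=\bigcup_n F_n$ with the $F_n$ closed, Fréchet–Urysohn, and forming an increasing $k$-cover. One first argues that each $F_n$ is nowhere dense: every nonempty open subset of $C_p([0,1])$ contains a translate of a subspace of the form $\{f:f\upharpoonright E=0\}$ with $E$ finite, and such a subspace contains a closed copy of $C_p([0,1])$, which is not Fréchet–Urysohn, so no $F_n$ can have interior. Thus $\{F_n\}$ would witness property (B) for $C_p([0,1])$ \emph{with Fréchet–Urysohn pieces}, and the goal becomes to contradict this in the spirit of the proof of Theorem \ref{no_B_and_FU}. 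The difficulty is that that argument extracts its auxiliary convergent sequences from the Fréchet–Urysohn property of the \emph{whole} space, which is exactly what is missing here; the natural substitute is that every compact subset of $C_p([0,1])$ is metrizable (it is a pointwise compact set of continuous functions and so has a countable network), hence Fréchet–Urysohn. Using the countable tightness of $C_p([0,1])$ together with the fact that $[0,1]$ is perfect, I would try to build a single compact set meeting the shells $F_{n+1}\setminus F_n$ for infinitely many $n$, contradicting the $k$-cover property. I expect the required combinatorics to reflect the failure of the Gerlits–Nagy $\gamma$-property for $[0,1]$ (ultimately, the fact that $[0,1]$ is not a countable union of strong measure zero sets, while $\gamma$-sets have strong measure zero and this class is countably additive); turning this heuristic into an actual construction is the principal technical burden of the proof.
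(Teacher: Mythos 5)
Your preparatory steps are all correct: the scattered-$K$ case is handled exactly as in the paper (via Corollary \ref{wniosek1}, or equivalently the Fr\'echet--Urysohn dichotomy); the verification that $C_w(S)$ has $(\ast)$ is sound (Mazur's theorem gives weak closedness of the balls $nB$, weakly compact sets are norm bounded, the weak and pointwise topologies coincide on bounded sets for scattered $S$, and $C_p(S)$ is Fr\'echet--Urysohn by Gerlits--Pytkeev); $(\ast)$ does pass to closed subspaces; and $\varphi^*$ does embed $C_p([0,1])$ as a closed subspace of $C_p(K)$. But the entire proof then hangs on the assertion that $C_p([0,1])$ does \emph{not} have $(\ast)$, and this you have not proved --- you yourself call it ``the principal technical burden''. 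This is a genuine gap, and the heuristic you offer does not close it. Concretely: suppose $C_p([0,1])=\bigcup_n F_n$ with $F_n$ closed, Fr\'echet--Urysohn and $k$-covering, and fix a standard witness of non-Fr\'echet--Urysohnness, say $A=\{f\in C([0,1]): 0\le f\le 1,\ \lambda(\{f<1\})<\tfrac12\}$, so that $\underline{0}\in\overline{A}$ but no sequence in $A$ converges to $\underline{0}$. All that Fr\'echet--Urysohnness of $F_n$ yields is a basic neighborhood $O(E_n,\epsilon_n)$ of $\underline{0}$ with $A\cap O(E_n,\epsilon_n)\cap F_n=\emptyset$; to contradict the $k$-cover property you must then assemble a convergent sequence (with its limit) hitting $A\cap O(E_{n_k},\epsilon_{n_k})$ for infinitely many $n_k$. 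Nothing, however, prevents the sets $E_n$ from increasing with dense union while $\epsilon_n\to 0$, and in that configuration any such convergent sequence would have a limit vanishing on a dense subset of $[0,1]$, hence equal to $\underline{0}$ --- exactly what the witness $A$ forbids. So the natural construction is blocked, an adaptive re-choice of witnesses only regresses, and the $\gamma$-set/strong-measure-zero heuristic concerns coverings of the \emph{domain} $[0,1]$, for which there is no available transfer to $k$-covers of the function space by closed Fr\'echet--Urysohn pieces (Gerlits--Nagy characterizes Fr\'echet--Urysohnness of the whole $C_p(X)$, not of pieces of a decomposition). As far as your argument shows, it is not even established that $C_p([0,1])$ fails $(\ast)$.

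The paper avoids precisely this difficulty by using a different invariant and a localization you are missing. Given a homeomorphism $\varphi:C_p(K)\to C_w(S)$ and a copy $Y$ of $C_p([0,1])$ inside $C_p(K)$, it forms $X=\overline{\vspan(\varphi(Y))}^{\|\cdot\|}$, a norm-separable closed subspace of $C(S)$. Since $S$ is scattered, $C(S)$ is Asplund, so $X^*$ is separable, so $B_X$ is metrizable in the weak topology; hence $X=\bigcup_n nB_X$, and a fortiori $\varphi(Y)$ and $Y\cong C_p([0,1])$, is a countable union of \emph{metrizable} subspaces --- contradicting a known, citable theorem that $C_p([0,1])$ is not such a union. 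The invariant ``countable union of metrizable subspaces'' is hereditary to arbitrary subspaces (no closedness or $k$-cover needed), and its failure for $C_p([0,1])$ requires no new construction. Note that you could not repair your approach by simply strengthening $(\ast)$ to demand metrizable pieces: bounded subsets of $C_w(S)$ need not be metrizable for nonmetrizable scattered $S$ (e.g.\ $S=[0,\omega_1]$), so $C_w(S)$ itself need not have the strengthened property. Asplundness yields metrizability of bounded sets only after passing to the separable subspace $X$; that localization is the key idea of the paper's proof and the one your proposal lacks.
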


\begin{proof}
Striving for a contradiction, assume that there is a homeomorphism $\varphi:C_p(K)\to C_w(S)$. By \ref{wniosek1}, the space $K$ is not scattered.
Hence $K$ can be continuously mapped onto the closed unit interval $[0,1]$ and in effect $C_p([0,1])$ embeds into $C_p(K)$. Denote by $Y$ a
homeomorphic copy of $C_p([0,1])$ in $C_p(K)$. Consider
$$X=\overline{\vspan(\varphi(Y))}^{w}=\overline{\vspan(\varphi(Y))}^{\| \cdot \|}.$$
The space $Y$ is separable being a copy of $C_p([0,1])$ and thus $X$ is norm-separable. Since $S$ is scattered, the space $C(S)$ is Asplund
\cite[Theorem 12.29]{F}
and hence $X^*$ is separable \cite[Theorem 8.26]{F}. This implies that $B_X$ in its weak topology is metrizable \cite[Proposition 3.28]{F}.
So $X=\bigcup_{n\in \omega}nB_X$ is a countable union of metrizable spaces.
Transferring this property by the homeomorphism $\varphi$ we see that $Y$ (and hence $C_p([0,1])$) is a countable union of metrizable spaces, which is impossible
(see \cite[Problem 446]{Tkachuk2} and \cite[Problem 210]{Tkachuk}).
\end{proof}

Another property which topologically distinguishes spaces $C_p(K)$ and $C_w(K)$ for infinite scattered compact $K$ is the Fr\'echet-Urysohn property.
Equivalence $(a)\Leftrightarrow (b)$ in Theorem \ref{B_charakteryzacja} says that, a compact space $K$ is a scattered if and only if $C_p(K)$ is Fr\'echet-Urysohn.
On the other hand, if $K$ is infinite, the space
$C_w(K)$ is not Fr\'echet-Urysohn (see remarks following Theorem \ref{no_B_and_FU}). 

\begin{remark}\label{rem-gen}
 It was proved in \cite[Corollary 1]{Kr} that if $K$ is an infinite compact metrizable $C$-space then $C_p(K)$ and $C_w(K)$ are not homeomorphic.
 This settled Problem
 \ref{problem1} for metrizable $C$-spaces. In fact we can say more (cf. Problem \ref{problem2}) in this situation. Namely, if $K$ is infinite compact
 metrizable $C$-space and $L$ is an arbitrary compactum, then $C_p(K)$ and $C_w(L)$ are not homeomorphic.
\end{remark}

 \begin{proof}
 First, observe that $L$ cannot be non-metrizable.
 Indeed, $K$ is compact metrizable so the network weight $nw(K)$ is countable. We have
 $\omega=nw(K)=nw(C_p(K))$ (see \cite[Problem 172]{Tkachuk}). Now, if $C_p(K)$ and $C_w(L)$ were homeomorphic, then $nw(C_w(L))=\omega$. But the pointwise topology
 is weaker then the weak topology, hence $\omega=nw(C_p(L))=nw(L)$ and we infer that $L$ is metrizable. From Theorem \ref{thm-scattered-weak} it follows that $L$
 is not scattered so being compact it must be uncountable. Note, that $K$ has to be uncountable too (otherwise, being metrizable, it would be scattered which
 is not possible by Corollary \ref{wniosek1}).
 Hence from Miljutin's theorem (see \cite[4.4.8]{AK}), $C_w(L)$ is homeomorphic to $C_w(K)$. However, as we mentioned, it was
 shown in \cite[Corollary 1]{Kr} that $C_p(K)$ and $C_w(K)$ are not homeomorphic.
\end{proof}

\section{Further remarks on the property (B)}\label{B_remarks}

Recall that a topological space $X$ is \emph{sequential} if every subset $A$ of $X$ containing limits of all convergent sequences from $A$ is closed in $X$.
A space $X$ is called a \emph{$k$-space} if every subset $A$ of $X$ is closed provided its intersection $A\cap K$ with any compact subset $K$ of $X$ is closed.
Clearly, every Fr\'echet-Urysohn space is sequential, and every sequential space is a $k$-space.

\begin{remark}\label{remark-sequential}
Let $X$ be a sequential space. Suppose that there is a collection $\{A_n:n\in\omega\}$ of closed nowhere-dense sets in $X$ such
that for any sequence $(x_i)_{i\in\omega}$ convergent in $X$ to $x$, there is $n\in \omega$ with $\{x_i:i\in\omega\}\cup\{x\}\subseteq A_n$.
Then $X$ has the property (B).
In other words,
for sequential spaces,
instead of checking the property (B) on arbitrary compact set, one can verify it on convergent sequences only.
\end{remark}
\begin{proof}
Without loss of generality we may assume that $A_n\subseteq A_{n+1}$, for any $n\in\omega$.
Let $K\subseteq X$ be compact. Suppose that $K\nsubseteq A_n$, for any $n\in \omega$. Then we can inductively construct 
an increasing sequence $(n_k)_{k\in\omega}$ of natural numbers and distinct points $x_k\in K\setminus A_{n_k}$. Indeed, put $n_0=0$ and
take $x_0\in K\setminus A_0$ (we assumed that $K\setminus A_n\neq \emptyset$). Suppose that $n_k$ and $x_k$ are already constructed.
By our assumption, there is $n_{k+1}>n_k$ with $A_{n_{k+1}}\supseteq \{x_0,\ldots , x_k\}$. Since $K\setminus A_{n_{k+1}}\neq \emptyset$, we can
pick $x_{k+1}\in K\setminus A_{n_{k+1}}$. This ends the inductive construction.

Since $K$ is compact, the set $Y=\{x_k:k\in\omega\}\subseteq K$ has an accumulation point $x\in K$.
If $x\notin Y$, then $Y$ is not closed and by sequentiality
there is a sequence $(x_{k_m})_{m\in \omega}$ converging to some point $z\notin Y$. Clearly, $\{x_{k_m}:m\in\omega\}\cup\{z\}\nsubseteq A_n$,
for any $n\in \omega$, contradicting
our assumption.

If $x\in Y$, then $x$ is an accumulation point of $Y'=Y\setminus\{x\}$ and $x\notin Y'$ so we can proceed as above.
\end{proof}

In Section \ref{property_B} we showed that the property (B) cannot be combined with Fr\'echet-Urysohn property. The next two examples demonstrate that
sequential spaces can have the property (B). The second one is a topological vector space.

\begin{exa}\label{S_omega}
Recall that the Arhangel'skii-Franklin $S_\omega$ space is the set $\omega^{<\omega}$, of all finite sequences of natural numbers, equipped with the
following topology
$\tau$:
$$U\in\tau\quad\Leftrightarrow\quad \forall s\in U\;\; \{n\in\omega:s\;\widehat{}\; n\notin U\}\;\text{is finite}$$
The space $S_\omega$ is sequential (see \cite{AF}). Let us prove that it has the property (B). For $n\in\omega$, put $A_n=\{s\in S_\omega: length(s)\leq n\}$.
Obviously, the set $A_n$ is closed and has empty interior.
It is not difficult to show (see \cite{AF}) that if $(s_i)$ is a sequence in $S_\omega$ converging
to $s\in S_\omega$, then there are $n_i\in\omega$ such that
$s_i=s\;\widehat{}\; n_i$, for all but finitely many $i\in\omega$.
Let $S=(s_i)$ be a convergent sequence in $S_\omega$. Denote its limit point by $s$. From what we noted above, it follows that
the set $I=\{i\in\omega:s_i\notin A_{lenght(s)+1}\}$ is finite. Let $m=\max\{length(s_i):i\in I\}$. Then $\{s_i:i\in\omega\}\cup\{s\}\subseteq A_m$.
By Remark \ref{remark-sequential}, the space $S_\omega$ has the property (B).
\end{exa}

\begin{exa}\label{Mackey}
Recall that the Mackey topology $\tau$ on the dual $X^*$ of a Banach space $X$ is the topology of uniform convergence on weak compact subsets of $X$.
This topology is weaker than the norm one and finer than the weak$^*$ topology.
By \cite[Theorem 5.6]{SW} it follows that the Banach space $\ell_\infty = \ell_1^*$ equipped with the  Mackey topology $\tau$ is a $k$-space.
For any compact subspace $K$ of $(\ell_\infty,\tau)$, the topology $\tau$ coincides with the weak$^*$ topology, therefore $(K,\tau)$ is metrizable,
since $\ell_1$ is separable. It easily follows that the space  $(\ell_\infty,\tau)$ is sequential.

Observe that norm closed balls in $\ell_\infty$ are weak$^*$ closed, hence also $\tau$-closed; and $\tau$-compact sets are weak$^*$ compact,
therefore norm bounded. Since $\tau$ is strictly weaker than the norm topology on $\ell_\infty$, from Proposition \ref{B_weak_general}
it follows that the space $(\ell_\infty,\tau)$ has the property (B).
\end{exa}

It seems that examples of topological vector spaces which are both sequential and with the property (B) are rather rare. If $X$ is an infinite-dimensional Banach space, then neither $(X,w)$  nor $(X^*,w^*)$ is a $k$-space (cf. \cite[p.\ 280]{SW} and \cite[p.\ 390]{KS}). Bellow we give a simple, elementary proof of these facts.

\begin{fact}\label{dual_pair} 
Let $\langle E,F\rangle$ be a pair of infinite-dimensional Banach spaces in duality, i.e., either $\langle E,F\rangle=\langle X,X^*\rangle$ or  $\langle E,F\rangle=\langle X^*,X\rangle$ for some infinite-dimensional Banach space $X$. Then $E$ contains a subset $A$ such that every bounded subset of $A$ is finite and $0\in \overline{A}^{\sigma(E,F)}\setminus A$.
\end{fact}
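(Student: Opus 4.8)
The plan is to treat the two cases simultaneously by viewing $E$ as an infinite-dimensional Banach space and $F$ as a linear subspace of the dual $E^{*}$, so that $\sigma(E,F)$ is the weak topology when $\langle E,F\rangle=\langle X,X^{*}\rangle$ and the weak$^{*}$ topology when $\langle E,F\rangle=\langle X^{*},X\rangle$; in both cases every $f\in F$ is a norm-continuous functional on $E$, and a basic neighbourhood of $0$ has the form $\{e:\max_{f\in\Phi}|f(e)|<\varepsilon\}$ for a finite $\Phi\subseteq F$ and $\varepsilon>0$. The condition ``every bounded subset of $A$ is finite'' forces $A=\bigcup_{n}(A\cap nB_{E})$ to be countable, so I am really looking for a countable set $A=\{a_{j}\}$ with $\|a_{j}\|\to\infty$ (so that each ball contains only finitely many $a_{j}$) such that for every finite $\Phi\subseteq F$ and every $\varepsilon>0$ some $a_{j}$ satisfies $\max_{f\in\Phi}|f(a_{j})|<\varepsilon$. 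This last property is exactly $0\in\overline{A}^{\sigma(E,F)}$, while $\inf_{j}\|a_{j}\|>0$ gives $0\notin A$.

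The first thing I would try is a scaled sequence $a_{j}=c_{j}u_{j}$ along a normalized sequence $(u_{j})$ in $E$ with $c_{j}\to\infty$. This works cleanly whenever $(u_{j})$ can be chosen weakly square-summable against $F$, that is $\sum_{j}|f(u_{j})|^{2}<\infty$ for every $f\in F$ (as for an orthonormal sequence in a Hilbert space, or the unit vectors of $c_{0}$): choosing $c_{j}\to\infty$ with $\sum_{j}c_{j}^{-2}=\infty$, if some finite $\Phi$ had $c_{j}\max_{f\in\Phi}|f(u_{j})|\ge\varepsilon$ for all large $j$, then $\sum_{j}\max_{f\in\Phi}|f(u_{j})|^{2}\ge\varepsilon^{2}\sum_{j}c_{j}^{-2}=\infty$, contradicting $\sum_{j}\max_{f\in\Phi}|f(u_{j})|^{2}\le\sum_{f\in\Phi}\sum_{j}|f(u_{j})|^{2}<\infty$. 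The difficulty, and what I expect to be the main obstacle, is that such a sequence need not exist: when $E$ has the Schur property (for instance $E=\ell_{1}$ in the weak case) a normalized weakly square-summable sequence would be weakly null, hence norm-null, which is absurd. In such spaces $0$ is a $\sigma(E,F)$-cluster point of no sequence of large-norm vectors at all, so the construction must be genuinely non-sequential.

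To get around this I would replace one direction per scale by many almost-orthogonal directions per scale, and win a Parseval-budget counting game. By Dvoretzky's theorem $E$ contains, for every $N$, a subspace $V_{N}$ that is $2$-isomorphic to $\ell_{2}^{\,d_{N}}$ with $d_{N}=2^{N}$; letting $v^{N}_{1},\dots,v^{N}_{d_{N}}\in V_{N}$ be the preimages of an orthonormal basis under this isomorphism, I set $a^{N}_{s}=c_{N}v^{N}_{s}$ and choose $c_{N}\to\infty$ with $c_{N}^{2}/d_{N}\to0$, say $c_{N}=2^{N/3}$. For any $f\in F$ the restriction $f\!\upharpoonright\!V_{N}$ transports to an $\ell_{2}$-vector of norm $\le 2\|f\|$, so Bessel's inequality yields $\sum_{s\le d_{N}}|f(v^{N}_{s})|^{2}\le 4\|f\|^{2}$; hence for a finite $\Phi\subseteq F$ the number of indices $s$ with $\max_{f\in\Phi}|f(a^{N}_{s})|\ge\varepsilon$ is at most $4c_{N}^{2}\varepsilon^{-2}\sum_{f\in\Phi}\|f\|^{2}$, which is smaller than $d_{N}$ once $N$ is large. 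Thus at every large scale there is a good index $s$ with $\max_{f\in\Phi}|f(a^{N}_{s})|<\varepsilon$ and $\|a^{N}_{s}\|\asymp c_{N}$, so $A=\{a^{N}_{s}:N\in\NN,\ 1\le s\le d_{N}\}$ satisfies $0\in\overline{A}^{\sigma(E,F)}$. Since $\|a^{N}_{s}\|$ depends only on $N$ and $c_{N}\to\infty$, each ball meets only finitely many levels and hence finitely many points of $A$, while $\inf\|a^{N}_{s}\|>0$ keeps $0\notin A$. Finally, I would remark that in the concrete space $E=\ell_{1}$ the appeal to Dvoretzky's theorem can be avoided: the Walsh--Hadamard system on the first $2^{N}$ coordinates, together with the Cauchy--Schwarz inequality, supplies the same Parseval budget by hand and keeps the argument elementary.
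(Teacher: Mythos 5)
Your proof is correct, but it takes a genuinely different --- and much heavier --- route than the paper's. The paper uses no almost-Euclidean structure at all: for each $n$ it takes an \emph{arbitrary} $n$-dimensional subspace $E_n\subseteq E$, a finite $(1/n^2)$-dense subset $D_n$ of the unit sphere of $E_n$, and sets $A=\bigcup_n nD_n$. Given a basic neighbourhood determined by norm-one functionals $f_0,\dots,f_k$ and $\varepsilon>0$, it chooses $n$ large ($n>k+1$ and $1/n<\varepsilon$); since $\dim E_n$ exceeds the number of functionals, plain linear algebra gives a unit vector $e\in E_n$ with $f_i(e)=0$ \emph{exactly}, and any net point $h\in D_n$ with $\|e-h\|\le 1/n^2$ then satisfies $|f_i(nh)|\le 1/n<\varepsilon$, so $nh\in A$ lies in the prescribed neighbourhood. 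Thus, where you invoke Dvoretzky's theorem and a Bessel-plus-pigeonhole count to produce one vector on which finitely many functionals are simultaneously small, the paper produces such a vector by exact annihilation (a finite intersection of kernels in a space of large dimension is nonzero) followed by approximation on a finite net; your bookkeeping $c_N^2/d_N\to 0$ plays the role of the paper's ``multiply a $1/n^2$-net by $n$'' trick. Your argument does yield more --- at level $N$ \emph{most} of the $2^N$ vectors lie in the given neighbourhood --- but that extra strength is not needed, and it is bought with a deep theorem where dimension counting plus compactness of finite-dimensional spheres suffices (as your own closing remark about the Walsh--Hadamard system in $\ell_1$ already suggests).

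One motivational claim in your proposal is false, although nothing in your actual construction depends on it: you assert that in a Schur space $0$ is a $\sigma(E,F)$-cluster point of \emph{no} sequence of large-norm vectors. The set $A$ you build refutes this. Indeed, $A$ is countable and each ball contains only finitely many of its points; moreover, since $F$ separates the points of $E$, every $\sigma(E,F)$-neighbourhood of $0$ must contain \emph{infinitely} many points of $A$: a neighbourhood $U$ meeting $A$ in a finite set $\{a_1,\dots,a_p\}$ could be shrunk, using functionals witnessing $a_i\neq 0$, to a neighbourhood of $0$ disjoint from $A$, contradicting $0\in\overline{A}^{\sigma(E,F)}$. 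Hence any enumeration of $A$ is a sequence with norms tending to infinity that has $0$ as a $\sigma(E,F)$-cluster point --- in $\ell_1$ as well. What is true (in every Banach space, by the uniform boundedness principle) is that no such sequence can $\sigma(E,F)$-\emph{converge} to $0$; that is the correct sense in which the construction must be non-sequential.
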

\begin{proof} For every positive integer $n$ take an $n$-dimensional subspace $E_n$ of $E$ and let $D_n$ be a finite $(1/n^2)$-dense subset of $E_n\cap S_E$. Define $A= \bigcup_n nD_n$. 

We shall check that $0\in \overline{A}^{\sigma(E,F)}$. Fix a basic $\sigma(E,F)$-open neighborhood $U$ of $0$ of the form $\{e\in E: |\langle e,f_i\rangle|<\varepsilon\mbox{ for }  i=0,1,\dots,k\}$, where $f_i\in S_F$ and $\varepsilon>0$. Take $n$ such that $n>k$ and $1/n <\varepsilon$. Since $\dim E_n > k$, we can find $e\in E_n\cap S_E$ such that $\langle e,f_i\rangle=0$ for $i=0,1,\dots,k$. Pick $h\in D_n$ such that $\|e-h\|\le 1/n^2$. Then $|\langle h,f_i\rangle|\le 1/n^2$, hence $|\langle nh,f_i\rangle|\le 1/n<\varepsilon$ for $i=0,1,\dots,k$. Therefore $nh\in A\cap U$.
\end{proof}

\begin{col}\label{not_k_space}
If $X$ is an infinite-dimensional Banach space, then neither $(X,w)$ nor $(X^*,w^*)$ is a $k$-space.
\end{col}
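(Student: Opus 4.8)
The plan is to apply Proposition \ref{dual_pair} directly to the two dual pairs $\langle X,X^*\rangle$ and $\langle X^*,X\rangle$, and to exhibit in each case an explicit set witnessing the failure of the $k$-space property. Fix a pair $\langle E,F\rangle$ as in Proposition \ref{dual_pair}, and note that the weak topology $\sigma(E,F)$ is exactly $w$ when $\langle E,F\rangle=\langle X,X^*\rangle$, and exactly $w^*$ when $\langle E,F\rangle=\langle X^*,X\rangle$. Let $A\subseteq E$ be the set furnished by that proposition, so that every norm-bounded subset of $A$ is finite and $0\in \overline{A}^{\sigma(E,F)}\setminus A$. The latter condition says precisely that $A$ is \emph{not} $\sigma(E,F)$-closed, since its closure contains the point $0\notin A$. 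I claim that, despite this, $A$ meets every $\sigma(E,F)$-compact set in a closed set, which is exactly the negation of the $k$-space property.

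The key observation is that every $\sigma(E,F)$-compact subset $K$ of $E$ is norm bounded. Indeed, such a $K$ is $\sigma(E,F)$-bounded, and in each of our two cases $\sigma(E,F)$-bounded sets are norm bounded by the uniform boundedness principle (weakly bounded subsets of the Banach space $X$, and weak${}^*$-bounded subsets of the dual Banach space $X^*$, are both norm bounded). Consequently, for any compact $K$ the set $A\cap K$ is a norm-bounded subset of $A$, hence finite by the choice of $A$. Being a finite subset of the Hausdorff space $(E,\sigma(E,F))$, the set $A\cap K$ is closed. Thus $A$ is a non-closed set whose intersection with every compact set is closed, so $(E,\sigma(E,F))$ fails to be a $k$-space. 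Taking $\langle E,F\rangle=\langle X,X^*\rangle$ shows that $(X,w)$ is not a $k$-space, and taking $\langle E,F\rangle=\langle X^*,X\rangle$ shows that $(X^*,w^*)$ is not a $k$-space.

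All the real work has already been carried out in Proposition \ref{dual_pair}; the present deduction is routine. The only point requiring a moment's care is the passage from $\sigma(E,F)$-compactness to norm boundedness via Banach--Steinhaus, which is where the completeness of the Banach space (needed to apply the uniform boundedness principle in the weak${}^*$ case, and to pass from weak to norm boundedness in the weak case) enters. Once that is in place, the finiteness of $A\cap K$ and hence its closedness are immediate, and the failure of the $k$-space property follows at once.
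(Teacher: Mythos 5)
Your proof is correct and is precisely the intended deduction from Proposition \ref{dual_pair}, which the paper leaves implicit: the set $A$ is not weakly (weak${}^*$) closed, yet meets every compact set in a finite (hence closed) set, since compact sets are norm bounded. This matches the paper's approach exactly.
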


In \cite[Theorem 1.5]{GKP} it was proved that for $X$ as in the above corollary, the space $(X,w)$ is not even an \emph{Ascoli} space
(which is weaker than being a $k$-space). Using Proposition 2.1 from this paper and above Propoition \ref{dual_pair} %popr. 8
one can easily prove a counterpart of this result for the weak$^*$ topology.

\begin{thrm}
The dual $X^*$ of a Banach space $X$ is Ascoli
in the weak$^*$ topology  if and only if
$X$ is finite-dimensional.
\end{thrm}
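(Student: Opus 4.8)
The plan is to prove the two implications separately, the forward one being routine and the reverse one following by combining Proposition \ref{dual_pair} with the non-Ascoli criterion (Proposition 2.1) of \cite{GKP}, exactly as the remark preceding the statement suggests.

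First I would dispose of the implication that a finite-dimensional $X$ yields an Ascoli dual. If $X$ is finite-dimensional, then so is $X^*$, and on a finite-dimensional space the weak$^*$ topology $\sigma(X^*,X)$ coincides with the norm topology. Hence $(X^*,w^*)$ is metrizable, therefore a $k$-space, and every $k$-space is Ascoli (Ascoli being weaker than being a $k$-space, as recalled in the discussion preceding Corollary \ref{not_k_space}). This settles the ``if'' direction.

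For the converse I would argue by contraposition: assuming $X$ infinite-dimensional, I produce a configuration inside $(X^*,w^*)$ that obstructs the Ascoli property. Applying Proposition \ref{dual_pair} to the dual pair $\langle E,F\rangle=\langle X^*,X\rangle$ gives a set $A\subseteq X^*$ with $0\in\overline{A}^{\,\sigma(X^*,X)}\setminus A$ and with every norm-bounded subset of $A$ finite. The crucial observation is that, by the Banach--Steinhaus theorem, every weak$^*$-compact subset of $X^*$ is norm-bounded; consequently, for an arbitrary weak$^*$-compact $K\subseteq X^*$ the intersection $A\cap K$ is a bounded subset of $A$, hence finite, and in particular closed and discrete in $K$. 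Thus $A$ is a non-closed subset of $(X^*,w^*)$ whose closure contains the point $0\notin A$, yet which meets every compact set in a finite set.

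Finally I would feed this configuration into Proposition 2.1 of \cite{GKP}, which supplies the sufficient condition for a Tychonoff space to fail the Ascoli property, namely the existence of a point $x_0$ and a subset $A$ with $x_0\in\overline{A}\setminus A$ meeting every compact set in a finite (relatively discrete) set. Taking $x_0=0$ and $A$ as produced above shows that $(X^*,w^*)$ is not Ascoli, which completes the contrapositive and hence the proof. The main ``obstacle'' is in fact already absorbed into Proposition \ref{dual_pair}: the delicate part is manufacturing a set with only finite bounded subsets but a genuine weak$^*$ accumulation point, and once that set is in hand the verification that it satisfies the hypotheses of the cited criterion is immediate, requiring nothing beyond the Banach--Steinhaus boundedness of weak$^*$-compact sets.
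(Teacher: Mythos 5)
Your strategy coincides with the route the paper sketches (the ``if'' direction is routine, and the ``only if'' direction is meant to follow from Proposition \ref{dual_pair} together with Proposition 2.1 of \cite{GKP}), but your execution of the second direction has a genuine gap: you misquote the criterion of \cite{GKP}. Proposition 2.1 there does \emph{not} assert that a Tychonoff space fails to be Ascoli as soon as it contains a point $z$ and a set $A$ with $z\in\overline{A}\setminus A$ such that $A\cap K$ is finite for every compact $K$. Its hypothesis is stronger: one needs, in addition, \emph{open} sets $V_a\ni a$ ($a\in A$) such that the family $\{V_a:a\in A\}$ is compact-finite, i.e.\ every compact subset of the space meets only finitely many $V_a$. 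The open witnesses are essential to the proof of that proposition (one takes functions $f_a$ with $f_a(a)=1$ supported in $V_a$; compact-finiteness of the sets $V_a$, not of $A$, is what makes $\{f_a:a\in A\}\cup\{0\}$ compact in the compact-open topology and not evenly continuous). The purely set-theoretic condition you state only gives that the space is not a $k$-space --- this is exactly how Corollary \ref{not_k_space} follows from Proposition \ref{dual_pair} --- and since the Ascoli property is strictly weaker than being a $k$-space, that does not suffice.

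The omission matters precisely in the weak$^*$ setting: every nonempty weak$^*$-open subset of $X^*$ is norm-unbounded, so the fact that weak$^*$-compact sets are norm bounded (Banach--Steinhaus) does not by itself produce a compact-finite family of open neighborhoods of the points of $A$; carelessly chosen neighborhoods will meet a fixed bounded set for infinitely many indices. The missing step is short but must be carried out, and it uses the concrete form of $A$ from the proof of Proposition \ref{dual_pair}: $A=\bigcup_n nD_n$, where $nD_n$ is a finite set of functionals of norm exactly $n$. For $a\in nD_n$ choose, using $\|a\|=\sup\{\langle a,f\rangle : f\in B_X\}$ (the supremum runs over the \emph{predual}, which is what makes the argument work for $\sigma(X^*,X)$), an element $f\in B_X$ with $\langle a,f\rangle>n-\tfrac{1}{2}$, and put $V_a=\{x^*\in X^*:\langle x^*,f\rangle>n-\tfrac{1}{2}\}$. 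Then $V_a$ is weak$^*$-open, $a\in V_a$, and every element of $V_a$ has norm greater than $n-\tfrac{1}{2}$; hence a set of norm bounded by $R$ meets $V_a$ only if $n<R+\tfrac{1}{2}$, and since each level $nD_n$ is finite, the family $\{V_a:a\in A\}$ is compact-finite. Only with this family in hand does Proposition 2.1 of \cite{GKP} apply (with $z=0$) and yield that $(X^*,w^*)$ is not Ascoli. Your ``if'' direction (finite-dimensional $\Rightarrow$ weak$^*$ topology equals norm topology $\Rightarrow$ metrizable $\Rightarrow$ $k$-space $\Rightarrow$ Ascoli) is correct.
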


\end{document}